\theoremstyle{plain}
\newtheorem{theorem}{Theorem}
\newtheorem{lemma}{Lemma}
\newtheorem{proposition}{Proposition}
\newtheorem{corollary}{Corollary}
\theoremstyle{definition}
\newtheorem{definition}{Definition}
\theoremstyle{remark}
\newtheorem{remark}{Remark}
\begin{document}

\title{\bf\Large Optimal location of resources and Steiner symmetry\\ in a population
dynamics model in heterogeneous environments}

\author{Claudia Anedda\footnote{Department of Mathematics and Computer Science,
University of Cagliari, Via Ospedale 72, Cagliari, 09124,  Italy (\tt canedda@unica.it).}\; and\;
Fabrizio Cuccu\footnote{Department of Mathematics and Computer Science,
University of Cagliari, Via Ospedale 72, Cagliari, 09124,  Italy (\tt fcuccu@unica.it).}}

\maketitle
\begin{abstract}
The subject of this paper is inspired by \cite{CC} and \cite{CCP}. In  \cite{CC} the authors 
investigate the dynamics of  a population in a heterogeneous environment by means of
diffusive logistic equations. An important part of their study consists in finding sufficient
conditions which guarantee the survival of the species. Mathematically, this task leads to
the weighted eigenvalue problem $-\Delta u =\lambda m u $ in a bounded smooth domain
$\Omega\subset \mathbb{R}^N$, $N\geq 1$, under homogeneous Dirichlet boundary
conditions, where $\lambda \in \mathbb{R}$ and $m\in L^\infty(\Omega)$.
The domain $\Omega$ represents the environment  and $m(x)$, called the local growth
rate, says where the favourable
and unfavourable habitats are located. Then, the authors in \cite{CC} consider a class
of weights $m(x)$ corresponding to environments where the total sizes of favourable and
unfavourable habitats are fixed, but their spatial arrangement is allowed to change; they
determine the best choice among them for the population to
survive.\\
In our work we consider a sort of refinement of the result above.
We write the weight $m(x)$ as sum of two (or more) terms, i.e. $m(x)=
f_1(x)+f_2(x)$, where $f_1(x)$ and $f_2(x)$ represent the spatial densities of the two
resources which contribute to form the local growth rate $m(x)$. Then, we fix the 
total size of each resource allowing their spatial location to vary. As our first main result,
we show that there exists an optimal choice of $f_1(x)$ and $f_2(x)$.
Our proof relies on some results in  \cite{CCP} and on a new property (to our knowledge)
about the classes of rearrangements of functions.
Moreover, we show that if $\Omega$ is Steiner symmetric, then the best arrangement of the 
resources inherits the same kind of symmetry (actually, this is proved in the more 
general context of the classes of rearrangements of measurable functions).
\end{abstract}

\noindent {\bf Keywords}: population dynamics, eigenvalue problem, indefinite weight, optimization, Steiner symmetry.

\smallskip
\noindent {\bf Mathematics Subject Classification 2010}: 47A75,  35J25, 35Q80.

\section{Introduction and main results}\label{intro}
\noindent In this paper we consider the weighted eigenvalue problem
\begin{linenomath}
\begin{equation}\label{A}
 \begin{cases}
 -\Delta u =\lambda m(x) u \quad &\text{in }\Omega,\\
 u=0 & \text{on } \partial\Omega,
 \end{cases}  
\end{equation}
\end{linenomath}
where $\Omega\subset\mathbb{R}^N$, $N\geq 1$,  is a bounded smooth domain with boundary $\partial\Omega$, the weight $m(x)$ belongs to $L^\infty(\Omega)$
and $\lambda\in\mathbb{R}$.
When the set $\{x\in\Omega:m(x)>0\}$ (respectively, $\{x\in\Omega:m(x)<0\}$) has
positive Lebesgue measure, problem \eqref{A} admits a increasing (decreasing)  sequence 
of 
positive (negative) eigenvalues (see \cite{dF}).
Here, under the first assumption above, we are interested in the  smallest positive eigenvalue $\lambda_1(m)$, which we will call the {\it principal positive eigenvalue} (see Section \ref{preliminaries}).    
More precisely, we study the minimization of $\lambda_1(m)$ when $m$ is chosen in an appropriate class of bounded measurable functions.

\noindent Problem \eqref{A} originates from the study of reaction-diffusion equations in mathematical ecology which date to the pioneering work \cite{S} of Skellam. Precisely, we deal with the following model examined by Cantrell and Cosner in \cite{CC} and \cite{CC91}
\begin{linenomath}
\begin{equation}\label{p1}
\begin{cases}
v_t=d\Delta v+[m(x)-cv]v \quad &\text{in }\Omega\times(0,\infty),\\
v(x,0)=v_0(x)\geq 0 & \text{for } x\in\overline{\Omega},\\
v(x,t)=0 & \text{on } \partial\Omega\times(0,\infty).
\end{cases}  
\end{equation}
\end{linenomath}
In \eqref{p1} $v(x,t)$ represents the population density of a species inhabiting the region $\Omega$ in position $x$ at time $t$ surrounded by the hostile region $\mathbb{R}^N\smallsetminus\Omega$, $v_0$ is the initial density and $c, d$ are positive constants describing the limiting effects of crowding and the diffusion rate of the population, respectively. The function $m(x)$ represents the local grow rate of
the population, it is positive on favourable habitats and negative on unfavourable ones. 
Moreover, the homogeneous
Dirichlet boundary conditions mean that the exterior of $\Omega$
is a deadly environment (any individual reaching the boundary dies).
The aim of the papers \cite{CC} and \cite{CC91} is to study how the spatial arrangements of favourable and unfavourable habitats in $\Omega$ affects the survival of the modelled population.
The authors show that \eqref{p1} predicts persistence for the population if $\lambda_1(m)<1/d$. As a consequence, determining the best spatial arrangement of favourable and
unfavourable habitats to survival, within a fixed class of environmental configurations, results in minimizing $\lambda_1(m)$ over the corresponding class of weights.
In \cite{CC} and \cite{CC91} different aspects of model \eqref{p1} has been extensively studied.
This kind of problems has been investigated by many authors,
mostly under Neumann boundary conditions (in this case the
boundary $\partial\Omega$ acts as a fence and any individual
reaching it returns to $\Omega$) or in the case of a periodic 
environment ($\Omega=\mathbb{R}^N$ and $m(x)$ periodic).
In particular, we mention Berestycki et al. (\hspace{-0.01cm}\cite{BHR}) and Lou and Yanagida (\hspace{-0.01cm}\cite{LY}), who investigated how the fragmentation of the environment affects the persistence of the population, Roques and Hamel (\hspace{-0.01cm}\cite{RH}), who studied the optimal arrangement of resources by using numerical computation, Jha and Porru (\hspace{-0.01cm}\cite{JP}) who, among the other things, exhibited an example of symmetry breaking of the optimal arrangement of
the local growth rate, Cosner et al. (\hspace{-0.01cm}\cite{CCP}),
who considered the minimization of $\lambda_1(m)$ in the framework of rearrangements of functions and Lamboley et al. (\hspace{-0.01cm}\cite{LLNP}), who investigated model \eqref{p1} with Robin boundary conditions.\\ 
Our present study has two main aims, which we describe in details in what follows.
 Throughout the paper we consider the Lebesgue measure on $\mathbb{R}^N$ and denote by $|E|$  the measure of a measurable set $E$. Moreover, $\chi_E$ represents the usual
characteristic or indicator function of the set $E$.\\
In \cite[Theorem 3.9]{CC} the authors consider the minimization of $\lambda_1(m)$ when $m$ is taken in the set
\footnote{We follow the notation of \cite[Theorem 3.9]{CC} except for the replacement of $m_0$
by $m_3$, $\bar{m}$ by $\check{m}$ and $\lambda_1^+$ by $\lambda_1$.}
\begin{linenomath}
\begin{equation*}
\begin{split}
 \mathcal{M}=\ &\Bigg\{m(x)\in L^\infty(\Omega):
 -m_2\leq m(x)\leq m_1\text{ a.e. in }\Omega, \int_\Omega m\,dx=m_3
 \\ & \text{ and } m(x)>0 \text{ on a set of positive measure}
\Bigg\},
\end{split} 
\end{equation*}
\end{linenomath}
where $m_1,m_2$ and $m_3$ are suitable fixed constants with $m_1$ and $m_2$
positive.
The choice of the class $\mathcal{M}$ has the following biological meaning: the local growth rate of the population in each point of $\Omega$ has a value between 
the minimum $-m_2$ and the maximum $m_1$;
moreover, the ``total growth rate'' is fixed and equal to $m_3$. This can be
obtained by introducing different amounts of  resources in 
each point of the environment $\Omega$; here, in order to simplify our exposition,
by the term ``resource'' we mean anything that affects the growth rate of the population both in positive (for example food) and in negative (for example predators) sense. How
should we arrange the favourable and unfavourable habitats in $\Omega$ in order to
maximize the chance of survival of the population? Theorem
3.9 in \cite{CC} establishes that an optimal choice consists in using only the maximum
$m_1$ and the minimum $-m_2$ values of the local growth rate.
In terms of the weight $m(x)$, this means that
an optimal configuration is a distribution of ``bang-bang''
type, i.e. is a function which takes only two values in $\Omega
$. \\
We note that in the previous approach the maximization of the chance of  
survival of the population is considered with  constraints only on the local growth rate
$m(x)$ and not on the resources available in the environment $\Omega$. Indeed, the same
local growth rate can be obtained with different blends and/or number of resources. 
For example, the same effect can be attained putting
some amount of food in $\Omega$ or putting more food but introducing also some
predators in the environment. In our work we decompose the local growth rate $m(x)$ into
a sum of two terms $f_1(x)$ and $f_2(x)$ which we interpret as the spatial densities of
different types of resources. Then, we fix the total amount of each resource and seek for
their best location in $\Omega$ to maximize the chance of the population to
survive. In mathematical terms, we consider the minimization of $\lambda_1(m)$ as 
the  weight $m(x)$ varies in the class
\begin{linenomath}
\begin{equation*}
\begin{split}
 \mathcal{M}=\,&\{m(x)\in L^\infty(\Omega): m(x)=f_1(x)
+f_2(x),\, f_1\in
\mathcal{F}_1,\,f_2\in\mathcal{F}_2,\\
& m(x)>0 \text{ on a set of positive measure}\},
\end{split} 
\end{equation*}
\end{linenomath}
where
\begin{linenomath}
$$\mathcal{F}_1=\left\{f(x)\in L^\infty(\Omega): -p_1\leq f(x)\leq 
q_1\text{ a.e. in }\Omega\text{ and }\int_\Omega f
\,dx=l_1\right\}$$
\end{linenomath}
and
\begin{linenomath}
$$\mathcal{F}_2=\left\{f(x)\in L^\infty(\Omega): -p_2\leq  f(x)\leq 
q_2\text{ a.e. in }\Omega\text{ and }\int_\Omega f\,dx=l_2\right\},$$
\end{linenomath}
with $p_1$, $q_1$, $l_1$ and $p_2$, $q_2$, $l_2$ suitable 
fixed constants.
As our first main result, we prove that a minimizer actually
exists.  As it can be expected, each component $f_1(x)$ and 
$f_2(x)$ of the optimal weight must be of ``bang-bang'' type.
Moreover, we find that the two ``optimal components''  cannot
be chosen independently. Indeed, they have a special feature: $f_2(x)$ has to be large
(respectively, small) where $f_1(x)$ is large (small) as much as
possible. To be precise, $f_1(x)$ and $f_2(x)$ must realize the equality sign in the Hardy-Littlewood
inequality \eqref{HLr}. 
We get a complete description of our minimizer in the following
theorem.
\begin{theorem}\label{cantrell2}
Let $\Omega\subset\mathbb{R}^N$ be a bounded smooth domain and $\mathcal{M}=\{m(x)\in L^\infty(\Omega): m(x)=f_1(x)
+f_2(x),\, f_1\in
\mathcal{F}_1,\,f_2\in\mathcal{F}_2,\, m(x)>0 \text{ on a set of positive measure}\}$, where
$\mathcal{F}_i=\{f(x)\in L^\infty(\Omega): -p_i \leq f(x)\leq 
q_i\text{ a.e. in }\Omega\text{ and }\int_\Omega f\,dx=l_i\}$, with
$p_i,q_i$ and $l_i$ constants such that
$-p_i|\Omega|< l_i<q_i|\Omega|$, $i=1,2$, and $q_1+q_2>0$. Moreover, let $
e_i=(p_i|\Omega|+l_i)/ (p_i+q_i)$, $i=1,2$.
Then there exist two measurable sets $E,G\subseteq\Omega$, with $|E|=e_1,|G|=e_2$,
subject to the conditions
\begin{linenomath}
\begin{equation*}
\begin{cases}
E\supset G &\text{if }e_1>e_2,\\
E=G & \text{if } e_1=e_2,\\
E\subset G &\text{if }e_1<e_2,
\end{cases}  
\end{equation*}
\end{linenomath}
such that $\check f_1=q_1\chi_E-p_1\chi_{\Omega\smallsetminus E}\in
\mathcal{F}_1$, $\check f_2=q_2\chi_G-p_2\chi_{\Omega\smallsetminus G}\in
\mathcal{F}_2$ and $\check m=\check f_1+\check f_2\,(\in \mathcal{M})$
satisfies $\lambda_1(\check{m})=\inf\{\lambda_1(m):m\in\mathcal{M}\}$.
\end{theorem}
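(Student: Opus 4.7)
The plan is to proceed in three stages: existence of a minimizer in $\mathcal{M}$, reduction of a minimizer to bang-bang components, and the nesting property between the corresponding level sets.

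\textbf{Existence.} The classes $\mathcal{F}_1$ and $\mathcal{F}_2$ are convex, bounded and weakly-$*$ closed in $L^\infty(\Omega)$, hence weakly-$*$ compact. Given a minimizing sequence $m_n=f_{1,n}+f_{2,n}\in\mathcal{M}$ I would extract subsequences $f_{i,n}\rightharpoonup^* \bar f_i\in\mathcal{F}_i$ and set $\bar m=\bar f_1+\bar f_2$. Invoking the appropriate (semi)continuity of $m\mapsto\lambda_1(m)$ on the relevant subclass (as exploited in \cite{CCP} for rearrangement subclasses) gives $\lambda_1(\bar m)=\inf_\mathcal{M}\lambda_1$. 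That $\bar m>0$ on a set of positive measure will follow a posteriori from the explicit bang-bang and nested structure constructed below, together with $q_1+q_2>0$ and $0<e_i<|\Omega|$: on $E\cap G$ (of positive measure $\min(e_1,e_2)$) the optimal weight equals $q_1+q_2>0$.

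\textbf{Bang-bang reduction.} Let $(\bar f_1,\bar f_2)$ be a minimizer and fix $\bar f_2$ to consider the one-sided problem $\inf_{f_1\in\mathcal{F}_1}\lambda_1(f_1+\bar f_2)$. Writing the Rayleigh characterization
\[
\frac{1}{\lambda_1(m)}=\sup_{u\in H^1_0(\Omega)}\frac{\int_\Omega mu^2\,dx}{\int_\Omega|\nabla u|^2\,dx},
\]
for each fixed test function $u$ the objective is linear in $f_1$, so its maximum over the convex set $\mathcal{F}_1$ is attained at an extreme point, namely at a bang-bang function. The results of \cite[Theorem~3.9]{CC} and \cite{CCP} applied in this shifted setting then yield a bang-bang minimizer $\check f_1=q_1\chi_E-p_1\chi_{\Omega\smallsetminus E}$ with $|E|=e_1$. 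Freezing $\check f_1$ and repeating the argument in the second variable produces $\check f_2=q_2\chi_G-p_2\chi_{\Omega\smallsetminus G}$ with $|G|=e_2$. By construction $\lambda_1(\check f_1+\check f_2)\le\lambda_1(\bar m)=\inf_{\mathcal{M}}\lambda_1$, so $(\check f_1,\check f_2)$ is itself a bang-bang minimizer.

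\textbf{Nesting.} Let $u>0$ denote the principal eigenfunction associated with $\check m=\check f_1+\check f_2$. Because $\check f_1$ is optimal over $\mathcal{F}_1$ with $\check f_2$ held fixed, the characterization of rearrangement-class optima in \cite{CCP} forces the pair $(\check f_1,u^2)$ to attain equality in the Hardy--Littlewood inequality \eqref{HLr}; for a bang-bang $\check f_1$ this is equivalent to $E$ coinciding, up to a null set, with a superlevel set $\{u>t_1\}$ where $t_1$ is determined by $|E|=e_1$. The symmetric argument applied to $\check f_2$ identifies $G$ with $\{u>t_2\}$, $|G|=e_2$. Since both $E$ and $G$ are superlevel sets of the \emph{same} function $u$, they are automatically nested: $e_1>e_2\Leftrightarrow t_1<t_2\Leftrightarrow E\supset G$, with $E=G$ when $e_1=e_2$, which is the asserted structure.

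The hardest step, and the one where I expect the new rearrangement-class property advertised in the abstract to play its role, is the consistency in Stage~3: the two one-sided optimality conditions — fixing one component and optimising the other, and vice versa — must be shown to refer to the \emph{same} eigenfunction, namely the one associated with the joint weight $\check m$. This simultaneous alignment of $\check f_1$ and $\check f_2$ against a common $u$ is precisely what forces $E$ and $G$ to be superlevel sets of a single function, and hence nested.
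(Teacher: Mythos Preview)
Your route is genuinely different from the paper's, and the difference is worth spelling out because you have misread where the ``new rearrangement property'' enters.

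\textbf{What the paper does.} The paper never runs an alternating (coordinate-wise) optimization. Instead it proves the identity
\[
\overline{\mathcal G(f_1)}+\overline{\mathcal G(f_2)}=\overline{\mathcal G(f_1+f_2)}
\]
whenever $f_1,f_2$ realize equality in Hardy--Littlewood (Theorem~\ref{sommacla}), and shows (Lemma~\ref{cantrell}) that each $\mathcal F_i$ is exactly such a closure $\overline{\mathcal G(f_i)}$ for a bang-bang $f_i$. Choosing $f_1,f_2$ compatibly (Theorem~\ref{scambio}) therefore collapses $\mathcal F_1+\mathcal F_2$ to a \emph{single} closure $\overline{\mathcal G(f_1+f_2)}$, whose representative $m_0=f_1+f_2$ is a three-valued step function with nested level sets already built in. Proposition~\ref{CCP} is then applied \emph{once}, to this single class, giving a minimizer $\check m\in\mathcal G(m_0)$; the bang-bang structure and the inclusion $E\subset G$ (or $G\subset E$) are read off from $(f_1+f_2)^*$ and an integral-balance argument. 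So the new property is used \emph{before} any optimization, to reduce two constraints to one --- not, as you guess, to reconcile two one-sided first-order conditions after the fact.

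\textbf{On your argument.} Your strategy can be made to work but has real gaps as written. (i) In Stage~1 the weak* limit $\bar m$ need not lie in $L^+$, so $\lambda_1(\bar m)$ is not a priori defined; appealing to the bang-bang form ``a posteriori'' is circular since Stage~2 presupposes a minimizer. (ii) In Stages~2--3 you invoke \cite{CCP} for the \emph{shifted} problem $\inf_{f_1}\lambda_1(f_1+\bar f_2)$; that reference treats $\lambda_1(m)$ over a rearrangement class, not $\lambda_1(m+\text{fixed})$. What actually gives you $E=\{u>t_1\}$ is simpler: global minimality of $(\check f_1,\check f_2)$ plus the Rayleigh quotient force $\check f_1$ to maximize $f_1\mapsto\int_\Omega f_1 u_{\check m}^2$ over $\mathcal F_1$, and then the bathtub principle yields the superlevel description against $u_{\check m}$ directly --- no alternating step and no ``consistency'' lemma needed. (iii) Finally, if $u_{\check m}$ has a level set of positive measure the bathtub maximizer is not unique, so $E$ and $G$ need not be nested; for the existence statement you must then \emph{replace} $G$ by a suitable $G'\subseteq E$ with the same measure and the same value of $\int f_2 u_{\check m}^2$, which is possible but not mentioned. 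The paper's reduction to $\mathcal G(f_1+f_2)$ sidesteps all three issues at once.
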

\noindent
The proof of Theorem \ref{cantrell2} relies on some results contained in \cite{CCP}
and on a new result (to our knowledge) about the classes of rearrangements of functions 
(see Section \ref{somme}). Incidentally, we note that, using the same theoretical
machinery, Theorem 3.9 in \cite{CC} can easily be proved.\\
Actually, the statement of Theorem \ref{cantrell2} can be
strengthen. Indeed, as a matter of fact, every minimizer of 
$\lambda_1(m)$  in the class $\mathcal{M}$ 
has the form described in Theorem \ref{cantrell2}. Unfortunately, this
does not follow from \cite{CCP} as it happens for Theorem \ref{cantrell2}; instead, it 
can be shown reasoning exactly as in the proof of Theorem
1.1 in \cite{ACF}. Due to the length of this argument, we prefer to
include this topic in a future paper.\\
We also note that Theorem \ref{cantrell2} can  easily be extended to
the general case
\begin{linenomath}
\begin{equation*}
\begin{split}
 \mathcal{M} =\, &\{m(x)\in L^\infty(\Omega): m(x)=\sum_{i=1}
 ^nf_i(x),\, f_i\in
\mathcal{F}_i\\
&\text{ and } m(x)>0 \text{ on a set of positive measure}\},
\end{split} 
\end{equation*}
\end{linenomath}
where
\begin{linenomath}
$$\mathcal{F}_i=\left\{f(x)\in L^\infty(\Omega): -p_i\leq f(x)\leq 
q_i\text{ a.e. in }\Omega\text{ and }\int_\Omega f\,dx=l_i\right\},$$
\end{linenomath}
with $i=1,\ldots,n$. In this case, there exists a minimizer $\check{m}$ which takes at least
two and at most $n+1$ 
values in $\Omega$.\\
The second main result of this paper concerns the Steiner 
symmetry of all the minimizers $\check m$ of $\lambda_1(m)$, when
$m$ varies in a class of rearrangements of a fixed bounded function.
Two measurable functions $f,g:\Omega \to \mathbb{R}$
are said \emph{equimeasurable} if the superlevel sets
$\{x\in \Omega: f(x)>t\}$ and $\{x\in \Omega: g(x)>t\}$ have the same  measure for all $t\in \mathbb{R}$. For a fixed $f\in L^\infty(\Omega)$ we call the set
$\mathcal{G}(f)=\{g:\Omega\to\mathbb{R}: g \text{ is measurable and $g$ and
$f$ are equimeasurable}\}$ the
\emph{class of rearrangements of $f$} (see Subsection
 \ref{rearra}).
Roughly speaking,
a set is {\it Steiner symmetric} if it is symmetric and convex relative to a hyperplane and a function is Steiner symmetric if any of its superlevel set
is Steiner symmetric (see Section \ref{secsteiner}).

\noindent In what follows we denote a point $x\in\mathbb{R}^N$ by $(x_1,x')$, 
where $x_1\in\mathbb{R}$ and $x'\in\mathbb{R}^{N-1}$.

\begin{theorem}\label{steiner}
Let $\Omega\subset\mathbb{R}^N$ be a bounded smooth domain and  assume it is Steiner symmetric with respect to the hyperplane
$T=\{x=(x_1,x')\in \mathbb{R}^N: x_1=0\}$. Let ${m}_0\in L^\infty(\Omega)$ such that $\{x\in \Omega: m_0(x)>0\}$ has
positive measure. Then every minimizer $\check m$ of the problem 
\begin{linenomath}
\begin{equation}\label{teo2}
\inf\{\lambda_1({m}):{m}
\in\mathcal{G}({m}_0)\}
\end{equation}
\end{linenomath}
is Steiner symmetric relative to $T$. 
\end{theorem}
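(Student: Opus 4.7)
The strategy is to combine the Rayleigh quotient characterization of $\lambda_1$ with Steiner symmetrization of both the weight and the principal eigenfunction. Fix a minimizer $\check{m}$ of \eqref{teo2} and let $u\in H^1_0(\Omega)$, $u>0$ a.e.\ in $\Omega$, be its principal positive eigenfunction, so that
\begin{linenomath}
$$\lambda_1(\check{m}) = \frac{\int_\Omega |\nabla u|^2\,dx}{\int_\Omega \check{m}\, u^2\,dx} = \inf_{\substack{v\in H^1_0(\Omega)\\ \int_\Omega \check{m}\, v^2\,dx > 0}} \frac{\int_\Omega |\nabla v|^2\,dx}{\int_\Omega \check{m}\, v^2\,dx}.$$
\end{linenomath}
Denote by $u^{\#}$ and $\check{m}^{\#}$ the Steiner symmetrizations of $u$ and $\check{m}$ with respect to $T$. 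Since $\Omega$ is Steiner symmetric one has $u^{\#}\in H^1_0(\Omega)$, and since Steiner symmetrization preserves the distribution one has $\check{m}^{\#}\in\mathcal{G}(m_0)$.

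Using $u^{\#}$ as a test function in the Rayleigh quotient for $\check{m}^{\#}$ and combining the P\'olya--Szeg\H{o} inequality $\int_\Omega |\nabla u^{\#}|^2\,dx \leq \int_\Omega |\nabla u|^2\,dx$ with the Hardy--Littlewood inequality for Steiner symmetrization $\int_\Omega \check{m}\, u^2\,dx\leq \int_\Omega \check{m}^{\#}(u^{\#})^2\,dx$ yields
\begin{linenomath}
$$\lambda_1(\check{m}^{\#}) \leq \frac{\int_\Omega |\nabla u^{\#}|^2\,dx}{\int_\Omega \check{m}^{\#}(u^{\#})^2\,dx} \leq \frac{\int_\Omega |\nabla u|^2\,dx}{\int_\Omega \check{m}\, u^2\,dx}=\lambda_1(\check{m}).$$
\end{linenomath}
Since $\check{m}^{\#}\in\mathcal{G}(m_0)$ and $\check{m}$ is a minimizer, every inequality in this chain is an equality; in particular $u^{\#}$ is a principal positive eigenfunction of $\check{m}^{\#}$ and $\lambda_1(\check{m}^{\#})=\lambda_1(\check{m})$.

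The hardest step is to upgrade these two integral equalities to pointwise Steiner symmetry of $\check{m}$. The equality case of P\'olya--Szeg\H{o}, combined with a Brothers--Ziemer type rigidity result for Steiner symmetrization (applied under the nondegeneracy $|\{\nabla u^{\#}=0\}\cap\{0<u^{\#}<\sup u^{\#}\}|=0$, verifiable from $u$ being a positive $C^{1,\alpha}$ solution of the elliptic equation $-\Delta u=\lambda_1(\check{m})\check{m}u$ via elliptic regularity and unique continuation), forces $u=u^{\#}$ up to a translation in the $x_1$ direction; this translation must be trivial because $u$ and $u^{\#}$ share zero boundary values on the Steiner symmetric domain $\Omega$, so $u$ itself is Steiner symmetric. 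Substituting $u=u^{\#}$ into the Hardy--Littlewood equality gives $\int_\Omega \check{m}\,u^2\,dx = \int_\Omega \check{m}^{\#}\,u^2\,dx$; since the level sets of the eigenfunction $u\in W^{2,p}(\Omega)$ have measure zero and therefore, for a.e.\ $x'$, $u^2$ is strictly decreasing in $|x_1|$ on its support, the slicewise equality case of Hardy--Littlewood compels $\check{m}=\check{m}^{\#}$ a.e.\ in $\Omega$. The principal obstacle is precisely these two rigidity ingredients, both of which ultimately amount to ruling out flat pieces of $u$ by exploiting the elliptic equation it satisfies.
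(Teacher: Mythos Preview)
Your overall architecture matches the paper's: symmetrize both $u$ and $\check m$, use P\'olya--Szeg\H{o} and Hardy--Littlewood to get a chain of inequalities, then exploit equality and a Cianchi--Fusco rigidity result. The serious gap is in how you verify the nondegeneracy hypothesis for the rigidity step.

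You claim $|\{\nabla u^{\#}=0\}\cap\{0<u^{\#}<\sup u^{\#}\}|=0$ follows from ``elliptic regularity and unique continuation'' for the equation $-\Delta u=\lambda_1(\check m)\check m u$. This does not work as stated: the condition is on $u^{\#}$, not on $u$, and Steiner symmetrization can in principle create flat slices even if $u$ has none; moreover, unique continuation for an equation with merely $L^\infty$ potential does not give control on the measure of the critical set. What the paper does instead is decisive and is the piece you are missing: since $\check m^{\#}$ is also a minimizer, Proposition~\ref{CCP} gives an increasing function $\Psi$ with $\check m^{\#}=\Psi(u^{\#})$, so $v:=u^{\#}$ solves the \emph{semilinear} equation $-\Delta v=\check\lambda_1\Psi(v)v$. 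Differentiating in $x_1$ and using that $\Psi(v)$ is decreasing in $x_1$ on $\Omega_+$ (because $v$ is Steiner symmetric and $\Psi$ is increasing) yields, after a careful integration-by-parts argument, the differential inequality $\Delta v_{x_1}+\check\lambda_1\Psi(v)v_{x_1}\ge 0$ in $\Omega_+$; the strong maximum principle then forces $v_{x_1}<0$ in $\Omega_+$, which is exactly the Cianchi--Fusco hypothesis.

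Your final step (from $u=u^{\#}$ to $\check m=\check m^{\#}$ via the slicewise Hardy--Littlewood equality case) also leans on the strict monotonicity of $u$ in $|x_1|$ that you have not established. The paper bypasses this entirely: once $u_{\check m}=u_{\check m}^{\#}$ is known, Proposition~\ref{CCP} gives $\check m=\psi(u_{\check m})$ for an increasing $\psi$, and then $\check m^{\#}=\psi(u_{\check m}^{\#})=\psi(u_{\check m})=\check m$ immediately. So the characterization $\check m=\psi(u_{\check m})$ from Proposition~\ref{CCP} is the engine of the whole proof, used twice; without it, both your rigidity verification and your conclusion are incomplete.
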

\noindent An equivalent result is proved in \cite{BHR}; however,
here we propose a novel proof.
\noindent As a corollary of Theorem \ref{steiner} we obtain
a Steiner symmetry result for the minimizers in 
Theorem 3.9 in \cite{CC} and in Theorem \ref{cantrell2}. 
The biological meaning of Theorem \ref{steiner} is the
following: if the region $\Omega$ is Steiner symmetric, for
the population to survive the best environment is given when the 
favourable habitats are located far from the boundary $\partial
\Omega$ and arranged in a Steiner symmetrical fashion. As a
Steiner symmetric set is convex relative to a direction, the
overall favourable habitat cannot be made, at least in that direction, of disconnected pieces.
In other words, it should not be very fragmented.  This conclusion is a well-known biological fact (see for
example \cite{CC,CC91,BHR,SK}).\\
\noindent 
Problem \eqref{A} with positive bounded weight $m(x)$ also has 
a well known physical interpretation: it models a vibrating membrane
$\Omega$ with clamped boundary $\partial\Omega$ and mass
density $m(x)$; the physical meaning of $\lambda_1(m)$ is
the principal natural frequency of the membrane. Thus,
the minimization of $\lambda_1(m)$ is physically equivalent
to find the mass distribution of the membrane which gives the
lowest principal natural frequency. 
Among many papers that consider this interpretation of problem
\eqref{A}, we recall \cite{CGIKO,CML1,CML2}, where the
minimization
of $\lambda_1(m)$ is addressed in the case of a class of
weights which take only two positive values. \\
\noindent
This paper is structured as follows. In Section \ref{preliminaries} we describe
the eigenvalue 
problem \eqref{A} in some details and summarize some known
results about the minimization of $\lambda_1(m)$. Moreover,
we recall the definition of class of rearrangements of a 
measurable function and some related properties we will need
in the sequel.
In Section \ref{somme} we show a new formula involving
the classes of rearrangements, which we will use in Section
\ref{weights} in order to prove Theorem \ref{cantrell2}.
Finally, Section \ref{secsteiner} contains the proof of
Theorem \ref{steiner}.

\section{Preliminaries}\label{preliminaries}

We denote by $|E|$ the measure of an arbitrary Lebesgue measurable set  $E \subseteq\mathbb{R}^N$ and, when $N=1$, we also write $|E|
_1$.
We identify two measurable sets $E,F$ that are equal up to a
nullset, i.e. if $|E\smallsetminus F\cup F\smallsetminus E|=0$ and similarly for measurable
functions equal almost everywhere.

\subsection{The weighted eigenvalue problem}
\noindent Let $\Omega\subset\mathbb{R}^N$ be a bounded smooth domain and 
$m\in L^\infty(\Omega)$.
By $H_0^1(\Omega)$ and $W^{2,2}(\Omega)$ we denote the usual Sobolev spaces; we use the norm $\|u\|_{H_0^1(\Omega)}=\int_\Omega|\nabla u|^2\,dx$ (see \cite{GT}).
Problem \eqref{A} is considered in weak form:
$u\in H_0^1(\Omega)$ is a weak solution of \eqref{A} if
\begin{equation}\label{D}\int_\Omega\nabla u\cdot \nabla \varphi\,dx=\lambda\int_\Omega 
m u\varphi\,dx\quad\forall \varphi\in C^\infty_0(\Omega).
\end{equation}
A nontrivial solution of \eqref{A} is called an eigenfunction associated to the eigenvalue
$\lambda$. It is easy to check that zero is not an eigenvalue of problem \eqref{A}. 
The following proposition is a consequence of a result contained in \cite{dF}.
\begin{proposition}\label{segnom} With the notation above we have:\\
i)  if $|\{x\in \Omega:m(x) >0\}|> 0$, then there is a divergent sequence $\{\lambda_k(m)\}
_{k=1}^\infty$ of positive eigenvalues of problem \eqref{A};\\
ii)  if $|\{x\in \Omega: m(x) <0\}|> 0$, then there is a divergent sequence $\{\lambda_{-k}(m)\}_{k=1}^\infty$ of negative eigenvalues 
of problem \eqref{A}.
\end{proposition}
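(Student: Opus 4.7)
The plan is to recast problem \eqref{A} as a spectral problem for a compact self-adjoint operator on the Hilbert space $H_0^1(\Omega)$ equipped with the inner product $\langle u,v\rangle=\int_\Omega\nabla u\cdot\nabla v\,dx$, and then invoke the standard spectral theorem. Concretely, for $m\in L^\infty(\Omega)$ the bilinear form $(u,v)\mapsto\int_\Omega muv\,dx$ is bounded on $H_0^1(\Omega)$ (combine $\|m\|_\infty$ with Poincaré), so Riesz representation yields a bounded linear operator $T_m:H_0^1(\Omega)\to H_0^1(\Omega)$ defined by $\langle T_m u,v\rangle=\int_\Omega muv\,dx$. Symmetry of this form makes $T_m$ self-adjoint, while the compact embedding $H_0^1(\Omega)\hookrightarrow L^2(\Omega)$ makes $T_m$ compact. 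Comparing with \eqref{D}, one sees that $u\in H_0^1(\Omega)\setminus\{0\}$ is an eigenfunction of \eqref{A} with eigenvalue $\lambda$ if and only if $T_m u=\mu u$ with $\mu=1/\lambda$; in particular $\mu=0$ cannot occur as an eigenvalue of \eqref{A}.

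By the spectral theorem for compact self-adjoint operators, the nonzero spectrum of $T_m$ consists of a (possibly finite) sequence of real eigenvalues $\{\mu_k\}$ whose only possible accumulation point is $0$. Translating via $\lambda_k=1/\mu_k$, it suffices to prove the existence of infinitely many positive eigenvalues of $T_m$ under hypothesis (i), since (ii) follows at once by replacing $m$ with $-m$ (which turns positive eigenvalues of $T_m$ into negative eigenvalues of $T_{-m}$). For this, I would use the Courant--Fischer min--max formula
\begin{linenomath}
\begin{equation*}
\mu_k^+(m)=\sup_{\substack{V\subset H_0^1(\Omega)\\ \dim V=k}}\;\inf_{\substack{u\in V\\ u\neq 0}}\frac{\langle T_m u,u\rangle}{\|u\|_{H_0^1}^2}=\sup_{\substack{V\subset H_0^1(\Omega)\\ \dim V=k}}\;\inf_{\substack{u\in V\\ u\neq 0}}\frac{\int_\Omega m u^2\,dx}{\int_\Omega|\nabla u|^2\,dx},
\end{equation*}
\end{linenomath}
and show that $\mu_k^+(m)>0$ for every $k\in\mathbb{N}$.

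The construction of an admissible $k$-dimensional subspace goes as follows. Since $|\{m>0\}|>0$, one can choose $k$ pairwise disjoint measurable subsets $A_1,\dots,A_k$ of $\{m>0\}$, each of positive measure (this is possible because a set of positive Lebesgue measure in $\mathbb{R}^N$ can be split arbitrarily finely). Standard approximation arguments in $L^2$ produce functions $\varphi_j\in C_0^\infty(\Omega)$, $j=1,\dots,k$, with pairwise disjoint supports and such that $\int_\Omega m\varphi_j^2\,dx>0$. Because of the disjoint supports, for every nonzero $u=\sum_{j=1}^k c_j\varphi_j$ in $V_k=\operatorname{span}\{\varphi_1,\dots,\varphi_k\}$ one has $\int_\Omega m u^2\,dx=\sum_{j=1}^k c_j^2\int_\Omega m\varphi_j^2\,dx>0$, so $\langle T_m u,u\rangle>0$. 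A compactness argument on the unit sphere of the finite-dimensional space $V_k$ yields a positive lower bound for the Rayleigh quotient, hence $\mu_k^+(m)>0$ and consequently $\lambda_k(m)=1/\mu_k^+(m)<\infty$. Since $\mu_k^+(m)\to 0$ as $k\to\infty$ (the only possible accumulation point of the spectrum), the eigenvalues $\lambda_k(m)$ form a divergent sequence, as claimed.

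The main obstacle is the step producing the $k$-dimensional subspace with strictly positive Rayleigh quotient for every $k$: everything else is standard spectral theory, and the crucial quantitative input is exactly the hypothesis $|\{m>0\}|>0$, which is what allows the splitting into $k$ disjoint positive-measure pieces and thereby forces $T_m$ to have infinitely many positive eigenvalues rather than just finitely many.
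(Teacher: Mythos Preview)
The paper does not give its own proof of this proposition; it simply attributes the result to de Figueiredo \cite{dF}. Your argument, by contrast, supplies a self-contained spectral-theoretic proof, and its skeleton is exactly the standard one: realise \eqref{A} through the compact self-adjoint operator $T_m$ on $H_0^1(\Omega)$, then use Courant--Fischer to show that the $k$-th positive max--min value is strictly positive for every $k$, whence $T_m$ has infinitely many positive eigenvalues $\mu_k^+\to 0$ and $\lambda_k=1/\mu_k^+\to\infty$. That structure is correct and is essentially what one finds in de Figueiredo's notes.

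One step deserves tightening. You write that ``standard approximation arguments in $L^2$'' produce $\varphi_1,\dots,\varphi_k\in C_0^\infty(\Omega)$ with \emph{pairwise disjoint supports} and $\int_\Omega m\varphi_j^2\,dx>0$. Mere $L^2$-density of $C_0^\infty(\Omega)$ does not by itself deliver disjoint supports: a mollification of $\chi_{A_j}$ spills over onto the neighbours. The clean way to get what you need is via Lebesgue points. Almost every point of $\{m>0\}$ is a Lebesgue point of $m$ with positive value; choose $k$ distinct such points $x_1,\dots,x_k\in\Omega$, take $\rho>0$ small enough that the balls $B_\rho(x_j)\subset\Omega$ are pairwise disjoint, and let $\varphi_j(x)=\psi((x-x_j)/\rho)$ for a fixed bump $\psi\in C_0^\infty(B_1)$. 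The Lebesgue point property gives $\rho^{-N}\int_\Omega m\varphi_j^2\,dx\to m(x_j)\int_{B_1}\psi^2>0$ as $\rho\to 0$, so for $\rho$ sufficiently small all the integrals are strictly positive and the supports are disjoint by construction. With this correction the rest of your argument goes through unchanged.
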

\noindent The smallest positive eigenvalue 
$\lambda_1(m)$, which we call the {\it principal positive eigenvalue},
 is simple and any associated eigenfunction is one-signed in $\Omega$ (see \cite [Theorem 
 1.13]{dF}), and similarly for $\lambda_{-1}(m)$.
\noindent In general, the eigenvalues of problem \eqref{A} form two monotone sequences
\begin{linenomath}
$$ 0< \lambda_1(m)< \lambda_2(m)\leq \ldots \leq \lambda_k(m)\leq\ldots$$
\end{linenomath}
and 
\begin{linenomath}$$\ldots\leq \lambda_{-k}(m)\leq \ldots\leq\lambda_{-2}(m)< \lambda_{-1}(m)<0,$$
 \end{linenomath}
where every eigenvalue is repeated with its multiplicity. 

\noindent Moreover, by classical regularity results, any eigenfunction $u$ related to $\lambda_1(m)$ 
belongs to $H^1_0(\Omega)\cap W^{2,2}(\Omega)\cap C^{1,\beta}(\overline{\Omega})$ for every $\beta\in(0,1)$ (see \cite{GT}). 
The principal positive eigenvalue $\lambda_1(m)$ has a variational characterization also known as the {\it Courant-Fischer Principle}

\begin{equation}\label{F}\frac{1}{\lambda_1(m)}\,= 
\max_{\genfrac{}{}{0pt}{3}{u\in H_0^1(\Omega)}{u\neq 0}}
\cfrac{\int_\Omega m u^2 \, dx}{\int_\Omega |\nabla u|^2\, dx }\,.
\end{equation}
\noindent
Furthermore, each maximizer of \eqref{F} is an eigenfunction associated to $\lambda_1(m)$ (see Proposition 1.10 and the proof of Lemma 1.1 in \cite{dF}).\\
As described in Section \ref{intro}, we consider the 
minimization of $\lambda_1(m)$ with respect to $m$, when $m$
belongs to a fixed class of bounded functions. Let us introduce in details
the classes we are interested in.

\subsection{Classes of rearrangements}\label{rearra}

Here we briefly recall some basic definitions and properties
about the rearrangements of measurable functions. For a
systematic and thorough treatment
of this subject we refer the reader to \cite{D}.

\begin{definition}\label{def1}
Let $E\subset\mathbb{R}^N$ be a set of finite measure. Two measurable functions $f,g:E \to \mathbb{R}$ are called \emph{equimeasurable}
or \emph{rearrangements of one another} if 
\begin{linenomath}
$$|\{x\in E: f(x)>t\}|=|\{x\in E: g(x)>t\}|\quad\forall\, t\in \mathbb{R}.$$
\end{linenomath}
\end{definition}

\noindent Equimeasurability of $f$ and $g$ is denoted by
$f\sim g$. Note that $\sim $ is an equivalence relation on the set 
of all measurable functions on $E$. Equimeasurable functions
share global extrema and integrals as it is precisely stated  
in the following proposition.

\begin{proposition}\label{rospo}
Let $E\subset\mathbb{R}^N$ be a set of finite measure and  $f,g:E \to \mathbb{R}$ two measurable functions. If $f\sim g$, then\\
i) $\text{esssup } f=\text{esssup } g$ and  $\text{essinf }  f=\text{essinf } g$;\\
ii) $f\in L^1(E)$ if and only if $g\in L^1(E)$, in which case $\int_Ef\,dx=\int_E g\,dx$;\\
iii) if $F:\mathbb{R}\to\mathbb{R}$ is a Borel measurable function, then $F\circ f\sim F\circ g$.
\end{proposition}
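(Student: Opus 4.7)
My plan is to exploit the pushforward (distribution) measures $\mu_f(B) := |f^{-1}(B)|$ and $\mu_g(B) := |g^{-1}(B)|$, which are finite Borel measures on $\mathbb{R}$ with total mass $|E|$. Equimeasurability is precisely the statement that $\mu_f$ and $\mu_g$ agree on the half-lines $(t,\infty)$, $t\in\mathbb{R}$. Since these half-lines form a $\pi$-system generating $\mathcal{B}(\mathbb{R})$ and the measures are finite, the uniqueness part of the Carath\'eodory extension theorem (equivalently, Dynkin's $\pi$-$\lambda$ lemma) yields $\mu_f = \mu_g$ on all of $\mathcal{B}(\mathbb{R})$. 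This single identity drives the whole argument.

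For (i), $\text{esssup}\, f = \inf\{t : \mu_f((t,\infty))=0\}$ depends only on $\mu_f$, so it coincides with $\text{esssup}\, g$. For the essential infimum, continuity from below of $\mu_f$ combined with equimeasurability gives $\mu_f([t,\infty)) = \mu_g([t,\infty))$ and hence $|\{f < t\}| = |\{g < t\}|$; then $\text{essinf}\, f = \sup\{t : |\{f < t\}| = 0\}$ yields the desired equality. For (ii), the change of variables formula for pushforwards gives $\int_E f\,dx = \int_{\mathbb{R}} y\,d\mu_f(y)$, with $f \in L^1(E)$ equivalent to $\int_{\mathbb{R}} |y|\,d\mu_f(y) < \infty$; since $\mu_f = \mu_g$, both the integrability and the numerical value of the integral transfer. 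A fully elementary alternative uses the layer-cake identity $\int_E f^{\pm}\,dx = \int_0^\infty |\{f^\pm > t\}|\,dt$, which again depends only on the distribution function.

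For (iii), since $F$ is Borel measurable, $F^{-1}((t,\infty))$ is a Borel subset of $\mathbb{R}$ for every $t$, so
\[
|\{F\circ f > t\}| = |f^{-1}(F^{-1}((t,\infty)))| = \mu_f(F^{-1}((t,\infty))) = \mu_g(F^{-1}((t,\infty))) = |\{F\circ g > t\}|,
\]
which is exactly $F\circ f \sim F\circ g$. The only genuinely nontrivial step is the extension of equimeasurability from half-lines to arbitrary Borel sets; the finiteness of $|E|$ is crucial here, since it makes the pushforwards finite measures and lets the standard uniqueness theorem apply without qualification. The rest is bookkeeping with distribution functions.
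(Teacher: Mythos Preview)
Your argument is correct. The key step---extending the equality of the distribution functions on half-lines $(t,\infty)$ to equality of the full pushforward measures $\mu_f=\mu_g$ on $\mathcal{B}(\mathbb{R})$ via the $\pi$-$\lambda$ lemma---is exactly what is needed, and it is valid here because $|E|<\infty$ makes both pushforwards finite. Once $\mu_f=\mu_g$, parts (i), (ii), (iii) are indeed bookkeeping: the essential supremum and infimum are read off from the support of $\mu_f$, the change-of-variables identity $\int_E f\,dx=\int_{\mathbb{R}} y\,d\mu_f(y)$ gives (ii), and for (iii) the Borel measurability of $F$ ensures $F^{-1}((t,\infty))\in\mathcal{B}(\mathbb{R})$ so the equality $\mu_f=\mu_g$ applies directly.

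As for comparison with the paper: the paper does not supply its own proof of this proposition but simply refers the reader to Proposition~3.3 of Day's thesis \cite{D}. Your self-contained argument via pushforward measures is the standard modern route and is arguably cleaner than unpacking the reference; in particular, proving (iii) in one line from $\mu_f=\mu_g$ is more transparent than the level-set manipulations one typically finds in older treatments. There is nothing to correct.
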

\noindent 
For a proof see, for example, \cite[Proposition 3.3]{D}. 
\begin{definition}\label{def2}
Let $E\subset\mathbb{R}^N$ be a set of finite measure. For every
measurable function $f:E\to\mathbb{R}$, the function 
$f^*:(0,|E|)\to\mathbb{R}$ defined by
\begin{linenomath}
$$f^*(s)=\sup\{t\in\mathbb{R}: |\{x\in E: f(x)>t\}|>s\}$$
\end{linenomath}
is called the \emph{decreasing rearrangement of $f$}.
\end{definition}
\noindent
It can be proved that $|\{x\in E: f(x)>t\}|=|\{x\in (0,|E|): f^*(x)>t\}|$ for all 
$t\in\mathbb{R}$ (see \cite[Theorem 5.2]{D}).
\begin{proposition} \label{furbi}
Let $f$ be as in Definition \ref{def2}. Then\\
i) $f^*$ is decreasing, right continuous and 
\begin{linenomath} $$\lim_{s\to 0}f^*(s)=\text{esssup } f\quad\text{and}\quad\lim_{s\to|E|}f^*(s)=\text{essinf } f;$$
\end{linenomath}
ii) $f\in L^1(E)$ if and only if $f^*\in L^1(0,|E|)$, in which case $\int_Ef\,dx=\int_0^{|E|} f^*\,ds$;\\
iii) if $F:\mathbb{R}\to\mathbb{R}$ is a Borel measurable function, then $F\circ f\in L^1(E)$ if and only if $F\circ f^*\in L^1(0,|E|)$, in which case $\int_EF\circ f\,dx=\int_0^{|E|} F\circ f^*\,ds$;\\
iv) for any measurable function $g$ on $E$ we have $g\sim f$ if and only if $g^*=f^*$.
\end{proposition}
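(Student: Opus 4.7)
The plan is to derive (i) and (iv) directly from the definition of $f^*$ and from the distribution identity $|\{x\in E:f(x)>t\}|=|\{s\in(0,|E|):f^*(s)>t\}|$ quoted just before the proposition, and to bootstrap (ii) and (iii) from it via the layer-cake formula. For (i), monotonicity is immediate since the set $A_s:=\{t\in\mathbb{R}:|\{f>t\}|>s\}$ shrinks as $s$ grows, so $f^*(s)=\sup A_s$ is nonincreasing. For right continuity at $s$, take $s_n\downarrow s$; monotonicity gives $\lim_n f^*(s_n)\le f^*(s)$, while any $t<f^*(s)$ satisfies $|\{f>t\}|>s\ge s_n$ eventually, so $f^*(s_n)\ge t$ for large $n$. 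The boundary values follow from the definition: $\lim_{s\to 0^+}f^*(s)=\sup\{t:|\{f>t\}|>0\}=\operatorname{esssup}f$, and $\lim_{s\to|E|^-}f^*(s)=\operatorname{essinf}f$ upon noting that $|\{f>t\}|<|E|$ iff $t\ge\operatorname{essinf}f$.

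For (iv): if $f\sim g$, then for every $t$ the sets defining $f^*(s)$ and $g^*(s)$ coincide, so $f^*=g^*$; conversely, if $f^*=g^*$, the distribution identity gives $|\{f>t\}|=|\{f^*>t\}|=|\{g^*>t\}|=|\{g>t\}|$ for every $t$, i.e.\ $f\sim g$. Part (ii) then follows from the layer-cake formula $\int_E f^+\,dx=\int_0^\infty|\{f>t\}|\,dt$ (and the analogue for $f^-$): the right-hand side depends only on the distribution function, which is shared by $f$ on $E$ and $f^*$ on $(0,|E|)$, so $f\in L^1(E)\Leftrightarrow f^*\in L^1(0,|E|)$ with equal integrals.

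The main obstacle is (iii), because Proposition \ref{rospo}(iii) is stated for equimeasurable functions on a \emph{common} domain, whereas here $F\circ f$ and $F\circ f^*$ live on different spaces. My approach is to upgrade the distribution identity to the set-theoretic statement $|f^{-1}(B)|=|(f^*)^{-1}(B)|$ for every Borel $B\subseteq\mathbb{R}$. This is done by a standard $\pi$--$\lambda$/monotone-class argument: the collection $\mathcal{D}$ of Borel sets $B$ for which the identity holds contains the $\pi$-system of half-lines $(t,\infty)$ by hypothesis, is closed under proper differences (using finiteness of $|E|$) and countable disjoint unions, hence equals the full Borel $\sigma$-algebra. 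Taking $B=F^{-1}((t,\infty))$, which is Borel when $F$ is Borel measurable, yields $|\{F\circ f>t\}|=|\{F\circ f^*>t\}|$ for every $t$; applying (ii) with $F\circ f$ in place of $f$ then gives the conclusion. The delicate point, and the only real work beyond the definitional manipulations, is verifying closure of $\mathcal{D}$ under complements—this is where the finite-measure assumption on $E$ (and the matching total measure $|E|=|(0,|E|)|$) is essential.
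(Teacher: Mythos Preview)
Your argument is correct and self-contained; the paper, by contrast, gives no proof of its own and simply refers to Definition~\ref{def2} together with Propositions~3.3 and~5.3 of Day's thesis~\cite{D}. What you have written is essentially the content underlying those cited results: (i) and (iv) straight from the definition and the distribution identity $|\{f>t\}|=|\{f^*>t\}|$, (ii) via the layer-cake formula, and (iii) by a $\pi$--$\lambda$ argument promoting the distribution identity from half-lines $(t,\infty)$ to arbitrary Borel sets---this last step is the one genuinely nontrivial ingredient, and your handling of it (including the observation that closure under complements needs $|E|<\infty$) is fine. Two cosmetic slips worth cleaning up: in the right-continuity step, since $s_n\downarrow s$ you should say that eventually $s_n<|\{f>t\}|$ rather than ``$s\ge s_n$''; and the biconditional ``$|\{f>t\}|<|E|$ iff $t\ge\operatorname{essinf}f$'' can fail at the endpoint $t=\operatorname{essinf}f$ when $|\{f=\operatorname{essinf}f\}|=0$, though the desired limit $\lim_{s\to|E|^-}f^*(s)=\operatorname{essinf}f$ still follows by treating $t<\operatorname{essinf}f$ and $t>\operatorname{essinf}f$ separately. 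Neither slip affects the validity of the argument.
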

\noindent
The proof easily follows from Definition \ref{def2} and Proposition 3.3 and 5.3 in \cite{D}.

\begin{definition}\label{prec1}
Let $E\subset\mathbb{R}^N$ be a set of finite measure and 
$f,g\in L^1(E)$. We write $g\prec f$ if
\begin{linenomath}
$$\int_0^t g^*\,ds\leq \int_0^t f^*\,ds\quad\forall\;
0\leq t\leq|E|\quad\quad
{and}\quad\quad\int_0^{|E|} g^*\,ds= \int_0^{|E|} f^*\,ds.$$
\end{linenomath}
\end{definition}

\begin{proposition}\label{prec2}
Let $E\subset\mathbb{R}^N$ be a set of finite measure and $f,g\in L^1(E)$. Then\\
i) $f\sim g$ if and only if $f\prec g$ and $g\prec f$;\\
ii) if $\alpha\leq f\leq\beta$ a.e. in $E$ with $\alpha,\beta\in\mathbb{R}$ and $g\prec f$,
then $\alpha\leq g\leq\beta$ a.e. in $E$;\\
iii) the constant function $c=\frac{1}{|E|}\int_E f\,dx$ precedes $f$, i.e.
$c\prec f$.\\
\end{proposition}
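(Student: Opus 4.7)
\bigskip

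\noindent\textbf{Proof plan for Proposition \ref{prec2}.} The plan is to handle the three parts in order, in each case translating the claim through the decreasing rearrangement $f^*$ via Proposition \ref{furbi} and then exploiting the monotonicity of $f^*$.

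For (i), the forward implication is immediate: if $f\sim g$, then by Proposition \ref{furbi}(iv) one has $f^*=g^*$ on $(0,|E|)$, so both inequalities in Definition \ref{prec1} hold with equality. For the converse, assume $g\prec f$ and $f\prec g$. Then
\begin{linenomath}
$$\int_0^t f^*\,ds=\int_0^t g^*\,ds\qquad\forall\, t\in[0,|E|].$$
\end{linenomath}
Differentiating at points of continuity (or simply subtracting the relation at $t$ and $t+h$ and letting $h\to 0^+$, using right-continuity of $f^*$ and $g^*$ from Proposition \ref{furbi}(i)) yields $f^*(t)=g^*(t)$ for every $t\in(0,|E|)$. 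I would then invoke Proposition \ref{furbi}(iv) to conclude $f\sim g$.

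For (ii), the reformulation is $\alpha\leq g^*\leq\beta$ on $(0,|E|)$, which, by Proposition \ref{furbi}(i) and the monotonicity of $g^*$, is equivalent to controlling $\mathrm{esssup}\,g=\lim_{s\to 0^+}g^*(s)$ and $\mathrm{essinf}\,g=\lim_{s\to|E|^-}g^*(s)$. The hypothesis $\alpha\leq f\leq\beta$ gives $\alpha\leq f^*\leq\beta$ by the same reasoning. For the upper bound, $g\prec f$ yields $\int_0^t g^*\,ds\leq\int_0^t f^*\,ds\leq\beta t$, so $\frac{1}{t}\int_0^t g^*\,ds\leq\beta$; since $g^*$ is decreasing and right-continuous, the left-hand side tends to $\mathrm{esssup}\,g$ as $t\to 0^+$. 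For the lower bound, writing $\int_t^{|E|}g^*=\int_0^{|E|}g^*-\int_0^t g^*\geq\int_0^{|E|}f^*-\int_0^t f^*=\int_t^{|E|}f^*\geq\alpha(|E|-t)$ and dividing by $|E|-t$, I let $t\to|E|^-$ and use the monotonicity of $g^*$ to extract $\mathrm{essinf}\,g\geq\alpha$.

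For (iii), set $c=\frac{1}{|E|}\int_E f\,dx$, so that $c^*\equiv c$ on $(0,|E|)$ and $\int_0^t c^*\,ds=ct=\frac{t}{|E|}\int_0^{|E|}f^*\,ds$ by Proposition \ref{furbi}(ii). The required inequality thus reads $t\int_t^{|E|}f^*\,ds\leq(|E|-t)\int_0^t f^*\,ds$, which follows at once from the decreasing character of $f^*$: both sides admit $t\,(|E|-t)f^*(t)$ as, respectively, an upper and a lower bound. Equality at $t=|E|$ is again Proposition \ref{furbi}(ii).

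The only subtle step is the implication ``equality of $\int_0^t f^*$ and $\int_0^t g^*$ for all $t$ forces $f^*=g^*$ pointwise on $(0,|E|)$'' in part (i); everything else is a direct manipulation of monotone right-continuous functions, so I expect the proof to be quite short. The differentiation argument in (i) can be replaced by the observation that two right-continuous monotone functions with equal indefinite integrals must coincide, which I would invoke to keep the exposition clean.
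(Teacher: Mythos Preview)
Your arguments for all three parts are correct. The paper does not actually prove this proposition but simply points to references: it notes that (i) follows from Proposition~\ref{furbi} (specifically part~(iv)) together with Definition~\ref{prec1}, and for (ii) and (iii) it cites \cite[Lemma~8.2]{D}. So you are supplying what the paper defers, and your reasoning matches what one expects those cited arguments to contain.

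A couple of small remarks on presentation. In (i), the cleanest way to pass from ``$\int_0^t f^*=\int_0^t g^*$ for all $t$'' to $f^*=g^*$ is exactly your second formulation: for fixed $t\in(0,|E|)$ the monotonicity of $f^*$ gives $f^*(t+h)\le \tfrac{1}{h}\int_t^{t+h}f^*\le f^*(t)$, and right-continuity then forces the average to converge to $f^*(t)$; the same for $g^*$. In (ii), your limit argument for the upper bound is slightly circular as phrased (you invoke $\tfrac{1}{t}\int_0^t g^*\to\mathrm{esssup}\,g$ before knowing the latter is finite); it is cleaner to observe directly that $g^*(t)\le\tfrac{1}{t}\int_0^t g^*\le\beta$ for every $t>0$, hence $\mathrm{esssup}\,g=\lim_{t\to 0^+}g^*(t)\le\beta$. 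The lower-bound half and part~(iii) are fine as written.
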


\noindent Property i) follows from Proposition \ref{furbi} and Definition \ref{prec1}; the
proof of ii) and iii) can be found in  \cite[Lemma 8.2]{D}.\\

\begin{definition}
Let $E\subset\mathbb{R}^N$ be a set of finite measure and $f:E\to\mathbb{R}$ a measurable function. We call the set
\begin{linenomath}
$$\mathcal{G}(f)=\{g:E\to\mathbb{R}: g \text{ is measurable and } g\sim f \}$$
\end{linenomath}
the \emph{class of rearrangements of $f$} or the \emph{set of rearrangements of $f$}.
\end{definition}
\noindent 
Note that, for $1\leq p\leq\infty$, if $f\in L^p(E)$ then
$\mathcal{G}(f)$ is contained in $L^p(E)$.
In this paper, we will consider rearrangements of bounded
functions. If $f_0\in L^\infty(E)$, we denote by $
\overline{\mathcal{G}(f_0)}$ the weak* closure of
$\mathcal{G}(f_0)$ in $L^\infty(E)$; 
$\overline{\mathcal{G}(f_0)}$ can be characterized by
the following property.

\begin{proposition}\label{prec}
Let $f_0\in L^\infty(E)$. Then
$\overline{\mathcal{G}(f_0)}=\{f\in L^\infty(E): f\prec f_0\}
$.
\end{proposition}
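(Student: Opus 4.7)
The plan is to prove the two inclusions of Proposition \ref{prec} separately. Set $K:=\{f\in L^\infty(E):f\prec f_0\}$. Since $\mathcal{G}(f_0)\subseteq K$ by Proposition \ref{prec2}(i), for the inclusion $\overline{\mathcal{G}(f_0)}\subseteq K$ it suffices to show that $K$ is weak* closed in $L^\infty(E)$. The key ingredient is the classical dual identity
\begin{linenomath}
$$\int_0^t f^*(s)\,ds \;=\; \sup\Bigl\{\int_A f\,dx : A\subseteq E,\ |A|=t\Bigr\},\qquad 0\le t\le |E|,$$
\end{linenomath}
which realises each partial mean of $f^*$ as a supremum of weak* continuous linear functionals (the $\chi_A$ lie in $L^1(E)$, the predual of $L^\infty(E)$). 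If $f_n\in K$ and $f_n\to f$ weak*, then for each fixed $A$ with $|A|=t$ the bound $\int_A f_n\,dx\le\int_0^t f_0^*\,ds$ passes to the limit, and taking the supremum over $A$ yields $\int_0^t f^*\,ds\le\int_0^t f_0^*\,ds$. The total-integral equality is preserved by testing against $\chi_E\in L^1(E)$.

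For the reverse inclusion $K\subseteq\overline{\mathcal{G}(f_0)}$ I would use a Krein--Milman/extreme-point argument. The set $K$ is convex directly from Definition \ref{prec1}, norm-bounded by Proposition \ref{prec2}(ii), and weak* closed by the previous step, hence weak* compact by Banach--Alaoglu. The central classical fact, which I would borrow from \cite{D}, is that every extreme point of $K$ lies in $\mathcal{G}(f_0)$: if $f\in K\smallsetminus\mathcal{G}(f_0)$, then by Proposition \ref{furbi}(iv) the preorder inequality $\int_0^t f^*\,ds\le\int_0^t f_0^*\,ds$ is strict on a nontrivial set of $t$, and using this slack one builds a bounded ``swap'' perturbation $h\not\equiv 0$, with $\int_E h\,dx=0$ and supported on two level sets of $f$ of equal measure, such that both $f\pm h$ still lie in $K$. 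Krein--Milman then presents $K$ as the weak* closed convex hull of $\mathcal{G}(f_0)$.

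The remaining step is to upgrade the weak* closed convex hull to the plain weak* closure, i.e.\ to show that every $f\in K$ is a weak* limit of honest rearrangements rather than merely of convex combinations of them. The standard route is direct approximation: partition $(0,|E|)$ into $n$ equal intervals, replace $f^*$ by its block averages, transport back to $E$ via measure-preserving maps associated to $f$ and $f_0$, and apply a continuous version of Birkhoff's doubly-stochastic-matrix decomposition to produce a rearrangement of $f_0$ whose integrals over a prescribed finite partition of $E$ match those of $f$ up to $O(1/n)$; letting $n\to\infty$ gives the desired weak* approximation. I expect the two main obstacles to be the swap construction underlying the extremality claim and this final approximation step — both are standard in the theory of rearrangements of functions and carried out in detail in \cite{D}, which is why citing them rather than reproving is appropriate here.
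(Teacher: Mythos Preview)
The paper does not actually prove this proposition: its entire argument is the single sentence ``The claim follows from Theorem 22.13 and Theorem 22.2 in \cite{D}.'' So in the end you and the paper arrive at the same place --- deferring the substantive work to Day's thesis --- but you supply a roadmap where the paper gives only a pointer.

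Your sketch is essentially sound, with one structural redundancy worth noting. Once you accept the direct approximation step (every $f\prec f_0$ is a weak* limit of honest rearrangements of $f_0$), the Krein--Milman detour through extreme points becomes unnecessary: the approximation already gives $K\subseteq\overline{\mathcal{G}(f_0)}$ outright, without passing through the closed convex hull. Conversely, Krein--Milman alone only yields $K=\overline{\mathrm{conv}}^{\,w*}(\mathcal{G}(f_0))$, and upgrading this to the plain weak* closure is precisely the approximation step you describe --- so steps 2--4 of your plan do no independent work. A cleaner organisation would be: (i) $K$ is weak* closed (your first paragraph, correct as written), hence $\overline{\mathcal{G}(f_0)}\subseteq K$; (ii) cite the approximation theorem in \cite{D} for the reverse inclusion.

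One minor quibble: convexity of $K$ does not follow ``directly from Definition \ref{prec1}'' as you say; it needs the subadditivity $\int_0^t\bigl(\tfrac{f+g}{2}\bigr)^*\,ds\le\tfrac12\int_0^t f^*\,ds+\tfrac12\int_0^t g^*\,ds$, which in turn rests on the sup-formula of Corollary \ref{corhl}. This is standard, but worth flagging since you invoke it.
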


\noindent The claim follows from Theorem 22.13 and Theorem 22.2 in
\cite{D}.

\begin{proposition}[Hardy-Littlewood inequality]\label{s2}
Let $E\subset\mathbb{R}^N$ be a set of finite measure and
$f, g:E \to \mathbb{R}$ two measurable functions  such that 
$|f|^*|g|^*\in L^1(0,|E|)$. Then $fg\in L^1(E)$ and
\begin{equation}\label{HLr}
\int_E f(x)g(x)\,dx\leq \int_0^{|E|}f^*
g^*\,ds.
\end{equation}
Moreover, there exists $\tilde{g}\sim g$ such that, inserted
in \eqref{HLr} in place of $g$, gives the equality sign in
\eqref{HLr}.
\end{proposition}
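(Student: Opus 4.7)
The plan is to establish the inequality \eqref{HLr} first for nonnegative $f$ and $g$ via the layer-cake representation of a function in terms of its super-level sets, then extend to signed functions by a simple shift, and finally produce the equalizer $\tilde g$ by transporting $g^*$ back to $E$ along a measure-preserving map constructed from the level structure of $f$.

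For $f,g\geq 0$ I would write $f(x)=\int_0^\infty\chi_{\{f>s\}}(x)\,ds$ and the analogous formula for $g$, and then apply Fubini's theorem to obtain
\begin{equation*}
\int_E f(x)g(x)\,dx=\int_0^\infty\!\!\int_0^\infty |\{f>s\}\cap\{g>t\}|\,ds\,dt.
\end{equation*}
Using the elementary set bound $|A\cap B|\leq\min\{|A|,|B|\}$, together with the fact that the super-level sets of the decreasing functions $f^*,g^*$ on $(0,|E|)$ are initial intervals of lengths $|\{f>s\}|$ and $|\{g>t\}|$ respectively (by the identity recorded after Definition \ref{def2}), one has $|\{f>s\}\cap\{g>t\}|\leq|\{f^*>s\}\cap\{g^*>t\}|$; applying the same Fubini identity on $(0,|E|)$ to $f^*g^*$ yields the desired bound. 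The integrability $fg\in L^1(E)$ follows by running the same computation with $|f|$ and $|g|$ in place of $f$ and $g$ and invoking the hypothesis $|f|^*|g|^*\in L^1(0,|E|)$.

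To pass from nonnegative to signed functions I would exploit the identities $(f+c)^*=f^*+c$ and Proposition \ref{furbi}ii): the shift $f\mapsto f+c$, $g\mapsto g+c$ with $c$ large enough makes both functions nonnegative when $f,g\in L^\infty$, and adds the same quantity $c\int_E(f+g)\,dx+c^2|E|$ to both sides of \eqref{HLr}, so the general inequality reduces to the nonnegative one. For $f,g$ merely satisfying the $L^1$-type hypothesis, a truncation $f_n=\max(-n,\min(f,n))$ combined with dominated convergence (using $|f|^*|g|^*$ as the dominating integrable function on $(0,|E|)$) handles the limiting step.

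For the equality, the idea is to construct a measure-preserving map $\tau\colon E\to(0,|E|)$ with $f=f^*\circ\tau$ almost everywhere and set $\tilde g=g^*\circ\tau$. Outside the level sets of $f$ of positive measure, the natural choice $\tau(x)=|\{y\in E\colon f(y)>f(x)\}|$ works; on each flat piece $\{f=c\}$ one must distribute $\tau$ bijectively and measurably onto the corresponding interval $(|\{f>c\}|,|\{f\geq c\}|)$, which is possible by the classical isomorphism theorem for Lebesgue spaces of equal measure. Proposition \ref{furbi}iv) then gives $\tilde g\sim g^*\sim g$, and the change of variables along the measure-preserving $\tau$ yields $\int_E f\,\tilde g\,dx=\int_0^{|E|} f^*g^*\,ds$. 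The main technical obstacle is precisely this measurable distribution step on the plateaus of $f$—a classical subtlety in rearrangement theory—while the rest of the argument reduces to Fubini and elementary set arithmetic.
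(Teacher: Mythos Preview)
Your argument is essentially correct and in fact goes well beyond what the paper does: the paper does not prove Proposition~\ref{s2} at all but simply records that it ``follows immediately from Theorem~10.1 and Theorem~11.1 in \cite{D}'' (Day's dissertation). Your layer-cake proof of the nonnegative case is the standard one, and the reduction of the signed case to the bounded case via truncation is sound. The only slip is in naming the dominating function for the limit on $(0,|E|)$: pointwise one has $|f_n^*|\leq|f^*|$ (truncation commutes with the decreasing rearrangement and does not increase absolute values), hence $|f_n^*g_n^*|\leq|f^*|\,|g^*|$, and it is $|f^*|\,|g^*|$---not $|f|^*|g|^*$---that serves as the pointwise majorant. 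Its integrability follows from the nonnegative Hardy--Littlewood inequality applied to $|f^*|\sim|f|$ and $|g^*|\sim|g|$, giving $\int_0^{|E|}|f^*|\,|g^*|\,ds\leq\int_0^{|E|}|f|^*|g|^*\,ds<\infty$. For the equality clause, your construction of a measure-preserving $\tau$ with $f=f^*\circ\tau$ and $\tilde g=g^*\circ\tau$ is exactly Ryff's theorem, which is what Day's Theorem~11.1 supplies; the ``plateau'' difficulty you flag is real but classical. In short: the paper outsources the proof to \cite{D}, while you sketch it from first principles; your approach buys self-containment, the citation buys brevity and a clean treatment of the measurable-selection step on the level sets of $f$.
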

\noindent The above proposition follows immediately from
Theorem 10.1 and Theorem 11.1 in \cite{D}.
\begin{corollary}\label{corhl}
 Let $E\subset\mathbb{R^N}$ be a set of finite measure,
$f:E \to \mathbb{R}$ a measurable function and $t\in(0,|E|)$  such that $|f|^*\in L^1(0,t)$. Then, $f\in L^1(A)$ for every
measurable subset $A$ of $E$ of measure $t$ and the identity
\begin{equation}\label{supA}
\int_0^tf^*ds=\sup_{\genfrac{}{}{0pt}{2}{A\subseteq E,}{|A|=t}}\int_Af\,dx
\end{equation}
holds.
\end{corollary}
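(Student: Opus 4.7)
The plan is to deduce both assertions directly from the Hardy–Littlewood inequality (Proposition \ref{s2}) and the basic properties of the decreasing rearrangement (Propositions \ref{rospo}--\ref{prec2}), treating characteristic functions as test weights.

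First I would address integrability. Fix a measurable $A\subseteq E$ with $|A|=t$ and let $h=|f|\chi_A$, viewed as a function on $E$. A direct check of Definition \ref{def2} gives $h^*(s)\le |f|^*(s)$ for all $s\in(0,|E|)$, and moreover $h^*(s)=0$ for $s\ge t$ because $\{h>0\}\subseteq A$ has measure at most $t$. Hence $h^*\in L^1(0,|E|)$ since $|f|^*\in L^1(0,t)$ by hypothesis, and Proposition \ref{furbi} (ii) applied to $h$ yields $\int_A|f|\,dx=\int_E h\,dx=\int_0^{|E|}h^*\,ds<\infty$, so $f\in L^1(A)$.

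Next I would establish the inequality $\int_A f\,dx\le\int_0^t f^*\,ds$. Writing $\int_A f\,dx=\int_E f\,\chi_A\,dx$, the Hardy–Littlewood inequality \eqref{HLr} applies because the product $|f|^*\,(\chi_A)^*=|f|^*\,\chi_{(0,t)}$ lies in $L^1(0,|E|)$ by our hypothesis on $|f|^*$. Since $(\chi_A)^*=\chi_{(0,t)}$, this gives
\begin{equation*}
\int_A f\,dx\le \int_0^{|E|} f^*(\chi_A)^*\,ds=\int_0^t f^*\,ds,
\end{equation*}
uniformly in $A$, so the supremum in \eqref{supA} is bounded above by $\int_0^t f^*\,ds$.

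It remains to exhibit a set $\tilde A\subseteq E$ with $|\tilde A|=t$ attaining the equality. Fix any $A_0\subseteq E$ with $|A_0|=t$ and apply the second assertion of Proposition \ref{s2} with $g=\chi_{A_0}$: there exists $\tilde g\sim\chi_{A_0}$ such that $\int_E f\tilde g\,dx=\int_0^{|E|} f^*(\chi_{A_0})^*\,ds=\int_0^t f^*\,ds$. By Proposition \ref{furbi} (iv), $\tilde g$ has the same decreasing rearrangement $\chi_{(0,t)}$ as $\chi_{A_0}$, so $\tilde g$ takes only the values $0$ and $1$ and equals $\chi_{\tilde A}$ for $\tilde A=\{\tilde g=1\}$, with $|\tilde A|=t$. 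This gives $\int_{\tilde A} f\,dx=\int_0^t f^*\,ds$, matching the upper bound and proving \eqref{supA}.

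No step really qualifies as an obstacle; the only delicate point is the last one, where one must be sure that the function $\tilde g$ produced by the equality case of Hardy–Littlewood is still a characteristic function, which is immediate from the equimeasurability criterion in Proposition \ref{furbi} (iv).
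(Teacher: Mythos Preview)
Your argument is correct and follows essentially the same route as the paper: apply Proposition~\ref{s2} with $g=\chi_A$ to obtain both the integrability of $f\chi_A$ and the inequality $\int_A f\,dx\le\int_0^t f^*\,ds$, then invoke the equality case of Proposition~\ref{s2} to produce $\tilde g\sim\chi_{A_0}$ and recognize $\tilde g$ as a characteristic function. The only cosmetic difference is that you spell out the $L^1(A)$ claim via a direct estimate on $(|f|\chi_A)^*$, whereas the paper obtains it immediately from the first assertion of Proposition~\ref{s2}; and you cite Proposition~\ref{furbi}(iv) rather than Definition~\ref{def1} to identify $\tilde g$ as $\chi_{\tilde A}$.
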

\begin{proof}
In Proposition \ref{s2}, choose $g=\chi_A$ for any measurable subset $A$ of $E$ of measure $t$. By \eqref{HLr} we find
\begin{equation}\label{corr}
\int_Af\,dx=\int_E f\chi_A\,dx\leq \int_0^{|E|}f^*
(\chi_A)^*\,ds=\int_0^tf^*\,ds.
\end{equation}
The subset $A$ being arbitrary in $E$, we can write
\begin{linenomath}
$$\sup_{\genfrac{}{}{0pt}{2}{A\subseteq E,}{|A|=t}} \int_Af\,dx\leq \int_0^tf^*
\,ds.$$
\end{linenomath}
Now, fix $A$ in \eqref{corr} and apply the second part 
of Proposition \ref{s2}. There exists $\tilde g\sim\chi_A$
such that
\begin{linenomath}
$$\int_E f\tilde g\,dx= \int_0^{|E|}f^*
\tilde g^*\,ds.$$
\end{linenomath}
By Definition \ref{def1}, it is not difficult to show that
$\tilde g\sim\chi_{A}$ implies $\tilde g=\chi_{\tilde A}$
with $\tilde A$ of measure $t$. Thus, we find
\begin{linenomath}
$$\int_{\tilde A} f\,dx= \int_0^tf^*\,ds,$$
\end{linenomath}
which concludes the proof. 
\end{proof}

\noindent Finally, we state Theorem 18.10 in \cite{D} for our case.
\begin{proposition}\label{dayprec}
Let $E\subset\mathbb{R}^N$ be a set of finite measure and
$f_1,f_2,g\in L^1(E)$. If  $g\prec f_1+f_2$, then there exist $g_1,g_2\in L^1(E)$
such that $g=g_1+g_2$ with $g_1\prec f_1$ and $g_2\prec f_2$.
\end{proposition}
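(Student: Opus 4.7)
I would prove this by a Hahn--Banach separation argument in $L^1(E)$. For $i=1,2$, set $C_i := \{h \in L^1(E) : h \prec f_i\}$. Each $C_i$ is convex, and one checks it is weakly compact in $L^1(E)$: uniform integrability follows from the Hardy--Littlewood--P\'olya property that $h \prec f_i$ implies $\int_E \Phi(h)\,dx \leq \int_E \Phi(f_i)\,dx$ for every convex $\Phi$, applied to $\Phi(x)=(|x|-k)^+$ as $k\to\infty$; weak closedness follows from Corollary~\ref{corhl}, since for any $A\subset E$ with $|A|=t$ and any weak limit $h$ of $\{h_n\}\subset C_i$, one has $\int_A h=\lim\int_A h_n\leq\int_0^t f_i^*\,ds$, whence $\int_0^t h^*\,ds=\sup_{|A|=t}\int_A h\,dx\leq\int_0^t f_i^*\,ds$. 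Consequently $C_1+C_2$ is convex and weakly closed in $L^1(E)$.

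Suppose, for contradiction, that $g \notin C_1+C_2$. Hahn--Banach supplies $\phi \in L^\infty(E)$ with
\begin{linenomath}
\begin{equation*}
\int_E \phi g \, dx \;>\; \sup_{h_1 \in C_1}\int_E \phi h_1 \, dx \;+\; \sup_{h_2 \in C_2}\int_E \phi h_2 \, dx.
\end{equation*}
\end{linenomath}
The heart of the argument is the following estimate: for every $h \prec f$ in $L^1(E)$,
\begin{linenomath}
\begin{equation*}
\int_E \phi h \, dx \;\leq\; \int_0^{|E|} \phi^* h^* \, ds \;\leq\; \int_0^{|E|} \phi^* f^* \, ds.
\end{equation*}
\end{linenomath}
The first inequality is Hardy--Littlewood (Proposition~\ref{s2}). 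The second follows by Lebesgue--Stieltjes integration by parts: with $H(t) := \int_0^t (f^*-h^*)\,ds \geq 0$ satisfying $H(0)=H(|E|)=0$, one obtains $\int_0^{|E|}\phi^*(h^*-f^*)\,ds = \int_0^{|E|}H(s)\,d\phi^*(s)\leq 0$ because $\phi^*$ is non-increasing. Moreover, the equality case of Proposition~\ref{s2} (taking a rearrangement of $\phi$) shows the bound is sharp, so $\sup_{h_i \in C_i}\int_E \phi h_i\,dx = \int_0^{|E|} \phi^* f_i^*\,ds$.

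It remains to combine these with $g \prec f_1+f_2$ and the sub-additivity $\int_0^t (f_1+f_2)^*\,ds \leq \int_0^t f_1^*\,ds + \int_0^t f_2^*\,ds$ (immediate from Corollary~\ref{corhl}, with equality at $t=|E|$); one further integration by parts against the decreasing $\phi^*$ then yields
\begin{linenomath}
\begin{equation*}
\int_E \phi g\,dx \;\leq\; \int_0^{|E|}\phi^*(f_1+f_2)^*\,ds \;\leq\; \int_0^{|E|}\phi^*(f_1^*+f_2^*)\,ds \;=\; \sup_{h_1\in C_1}\!\int \phi h_1 \,+\, \sup_{h_2\in C_2}\!\int \phi h_2,
\end{equation*}
\end{linenomath}
contradicting the strict separation. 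Hence $g\in C_1+C_2$, which is the claim. The main obstacle I anticipate is the bookkeeping in the integration-by-parts step, since $\phi^*$ is only of bounded variation and one must work with Lebesgue--Stieltjes integrals (or approximate $\phi^*$ from above by smooth non-increasing functions and pass to the limit). Once this is set up, the remainder is a clean orchestration of tools already in Section~\ref{preliminaries}: Hardy--Littlewood with its equality case, Corollary~\ref{corhl}, and the monotonicity of $\phi^*$.
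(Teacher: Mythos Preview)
The paper does not supply its own proof of this proposition: it is stated as a specialization of Theorem~18.10 of Day's thesis \cite{D} and simply cited. So there is no ``paper's proof'' to compare against directly.

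Your Hahn--Banach/duality argument is a correct and standard route to this decomposition theorem. The pieces fit together: each $C_i=\{h:h\prec f_i\}$ is convex and weakly compact in $L^1(E)$ (your uniform-integrability argument via $\Phi(x)=(|x|-k)^+$ is fine, and weak closedness follows from Corollary~\ref{corhl} together with preservation of $\int_E h$ under weak limits), so $C_1+C_2$ is convex and weakly compact, hence norm-closed; strict separation then yields a $\phi\in L^\infty(E)$. The key inequality $\int_E\phi h\le\int_0^{|E|}\phi^*f^*$ for $h\prec f$ is exactly Hardy--Littlewood plus the Abel/Stieltjes step you describe, and the technical concern you flag (that $\phi^*$ is merely non-increasing and bounded, hence BV) is genuine but routine: either work with the Lebesgue--Stieltjes measure $d\phi^*$ directly, or approximate $\phi^*$ monotonically by smooth decreasing functions and pass to the limit.

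One small slip: in the sharpness step you write ``taking a rearrangement of $\phi$'', but $\phi$ is the fixed separating functional; what you need is a rearrangement $\tilde f_i\sim f_i$ (so $\tilde f_i\in C_i$) achieving $\int_E\phi\tilde f_i=\int_0^{|E|}\phi^*f_i^*$, which Proposition~\ref{s2} supplies by symmetry of the roles of the two functions. With that correction the chain of inequalities closes and the contradiction follows. In short, your approach is sound and self-contained, whereas the paper outsources the result to \cite{D}.
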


\subsection{The minimization of $\lambda_1(m)$}

As mentioned in Section \ref{intro}, many authors investigated
the minimization of $\lambda_1(m)$ as a function of the weight
$m$ in connection with mathematical ecology. Such a minimization is examined over a class of functions 
which reflects some biological constraints.\\
Here we are mainly concerned with the papers \cite{CC} and
\cite{CCP}. In particular, in \cite{CCP} the authors consider
the optimization of $\lambda_1(m)$ as $m$ varies in a class of rearrangements. Their results about existence and characterization of a minimizer $\check{m}$ of $\lambda_1(m)$
can be summarized and adapted to our treatment as follows.

\begin{proposition}\label{CCP}
Let $\Omega\subset\mathbb{R}^N$ be a bounded smooth domain and
$m_0\in L^\infty(\Omega)$ such that $|\{x\in\Omega:m_0(x)>0\}|>0$. Then, there exists a solution $\check{m}\in\mathcal{G}(m_0)$ of the minimization problem
\begin{equation}\label{M}
\inf\{\lambda_1({m}):{m}\in\overline{\mathcal{G}({m}_0)}\}.
\end{equation}
Furthermore, if $\check m\in \mathcal{G}({m}_0)$ is an arbitrary solution of \eqref{M}
and $u_{\check{m}}$ denotes the unique positive eigenfunction normalized by $\|u_{\check m}\|_{H_0^1(\Omega)}= 1$ of \eqref{A} with $m=\check{m}$ and $\lambda=
\lambda_1(\check m)$, then there exists an increasing function $\psi$ such that $\check{m}=\psi(u_{\check{m}})$
a.e. in $\Omega$.
\end{proposition}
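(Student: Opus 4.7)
The plan is to transcribe the rearrangement-theoretic argument of \cite{CCP}. Two ingredients organize the proof: the Courant--Fischer formula \eqref{F}, which exhibits $m \mapsto 1/\lambda_1(m)$ as a supremum of weak*-continuous affine functionals on $L^\infty(\Omega)$ (hence convex and weak*-lower semicontinuous), and the Hardy--Littlewood inequality \eqref{HLr}, which identifies the maximizers of the linear functional $m\mapsto\int_\Omega m\,w\,dx$ over a class of rearrangements.

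First I would produce a minimizer in the enlarged set. By Proposition \ref{prec} the set $\overline{\mathcal{G}(m_0)}$ is convex, norm-bounded by $\|m_0\|_\infty$ and weak*-closed, hence weak*-compact. Pick a minimizing sequence $m_n \in \mathcal{G}(m_0)$ with $\lambda_1(m_n)\to\inf$ and denote by $u_n$ the positive eigenfunctions normalized so that $\|u_n\|_{H^1_0(\Omega)}=1$. Passing to a subsequence, $m_n \rightharpoonup^{*} \check{m}$ in $L^\infty(\Omega)$ and, by Rellich, $u_n \to u$ strongly in $L^2(\Omega)$ and weakly in $H^1_0(\Omega)$. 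A standard splitting together with the uniform bound $\|m_n\|_\infty\le\|m_0\|_\infty$ yields
\[
\frac{1}{\lambda_1(m_n)} \;=\; \int_\Omega m_n u_n^2\,dx \;\longrightarrow\; \int_\Omega \check{m}\, u^2\,dx \;\leq\; \frac{1}{\lambda_1(\check{m})},
\]
the last step being \eqref{F}. Since the left-hand side converges to $\sup\{1/\lambda_1(m):m\in\overline{\mathcal{G}(m_0)}\}$ and $\check m$ lies in that set, this forces $\lambda_1(\check m)=\inf\{\lambda_1(m):m\in\overline{\mathcal{G}(m_0)}\}$ together with $u = u_{\check m}$ up to normalization.

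To promote $\check m$ to $\mathcal{G}(m_0)$ and to deduce the form $\check m = \psi(u_{\check m})$, I would exploit that $\check m$ maximizes the \emph{linear} functional $m \mapsto \int_\Omega m\,u_{\check m}^2\,dx$ on $\overline{\mathcal{G}(m_0)}$: for any $\tilde m\in\overline{\mathcal{G}(m_0)}$ Courant--Fischer gives $\int_\Omega \tilde m\,u_{\check m}^2\,dx\le 1/\lambda_1(\tilde m)\le 1/\lambda_1(\check m)=\int_\Omega \check m\,u_{\check m}^2\,dx$. Applying the equality statement of Proposition \ref{s2} to $m_0$ and $u_{\check m}^2$ produces an $\hat m\in\mathcal{G}(m_0)$ that is comonotone with $u_{\check m}^2$ — equivalently, of the form $\psi(u_{\check m})$ with $\psi$ nondecreasing — and that realizes the same maximum. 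The equality case in \eqref{HLr} forces $\hat m$ and $\check m$ to agree almost everywhere outside the level sets of $u_{\check m}$, so we may replace $\check m$ by $\hat m$ and obtain both claims simultaneously: a minimizer in $\mathcal{G}(m_0)$ admitting the representation $\psi(u_{\check m})$.

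The main obstacle I expect is controlling the level sets of the eigenfunction. The identification $\check m=\psi(u_{\check m})$ could fail on a set $\{u_{\check m}=c\}$ of positive measure, because there one can rearrange $\check m$ freely without altering $\int \check m\,u_{\check m}^2\,dx$, and Hardy--Littlewood leaves the maximizer undetermined on such plateaus. To rule this out one uses that $u_{\check m}\in H^1_0(\Omega)\cap W^{2,2}(\Omega)$ is strictly positive and satisfies $-\Delta u_{\check m}=\lambda_1(\check m)\check m\,u_{\check m}$: by Stampacchia's theorem $\nabla u_{\check m}=0$ and $\Delta u_{\check m}=0$ almost everywhere on $\{u_{\check m}=c\}$, so the eigenvalue equation forces $\check m=0$ a.e.\ there, and a standard unique-continuation and connectedness argument then excludes positive-measure level sets for $c>0$. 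With this ingredient secured, the monotonicity of $\psi$ and the assertion $\check m\in\mathcal{G}(m_0)$ follow from the equality case of \eqref{HLr}.
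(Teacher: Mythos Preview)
The paper's own proof of this proposition is a short list of citations to \cite{ACF} and \cite{CCP}; you are reconstructing the underlying argument, and your overall shape---weak* compactness to get a minimizer in $\overline{\mathcal{G}(m_0)}$, then Hardy--Littlewood to pass to $\mathcal{G}(m_0)$---matches what those references do.

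The gap is in the second clause, that \emph{every} minimizer $\check m\in\mathcal{G}(m_0)$ has the form $\psi(u_{\check m})$. Replacing $\check m$ by the comonotone $\hat m$ only exhibits \emph{one} good minimizer; for the arbitrary one you appeal to ``a standard unique-continuation and connectedness argument'' to rule out positive-measure level sets $\{u_{\check m}=c\}$ with $c>0$. This step is not justified: unique continuation for $-\Delta u=Vu$ with $V\in L^\infty$ controls vanishing of $u$ (or vanishing to infinite order at a point), not constancy on a set of positive measure, and first eigenfunctions with genuine plateaus do occur once the weight vanishes on an open set. For instance, on $\Omega=(-2,2)$ with $m=\chi_{\{1<|x|<2\}}$ the positive eigenfunction is $u\equiv 1$ on $[-1,1]$ extended by $\cos\!\big(\tfrac{\pi}{2}(|x|-1)\big)$. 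So the PDE alone does not exclude plateaus.

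The route taken in \cite{CCP} does \emph{not} need to exclude fat level sets. Your Stampacchia computation already gives the crucial fact: $\check m=0$ a.e.\ on every positive-measure level set $\{u_{\check m}=c\}$, $c>0$. Hence $\check m$ is \emph{constant} there, which is exactly what the representation $\check m=\psi(u_{\check m})$ requires on such a set. Off the plateaus, equality in \eqref{HLr} for the pair $(\check m,u_{\check m}^2)$ forces the superlevel sets of $\check m$ and of $u_{\check m}$ to be nested up to nullsets, producing an increasing $\psi$ there; assigning $\psi(c)=0$ at a flat level is compatible with monotonicity because the corresponding block of $m_0^\ast$ must itself be identically zero. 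In short, your Stampacchia conclusion should be used to \emph{build} $\psi$ directly, not as input to a continuation argument meant to eliminate the plateaus.
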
 
\begin{proof}
By Proposition 3.6 in \cite{ACF}, $\overline{\mathcal{G}({m}_0)}$ is weakly* compact and metrizable in the weak* topology.
Therefore $\mathcal{G}({m}_0)$ is dense in 
$\overline{\mathcal{G}({m}_0)}$. By Proposition 2.1 of 
\cite{CCP}, there is a solution of the problem \eqref{M}.
By Theorem 2.1 of \cite{CCP}, there is a minimizer 
$\check m \in \mathcal{G}({m}_0)$ and, for any
such minimizer, there exists an increasing function
$\psi$ such that $\check{m}=\psi(u_{\check{m}})$
a.e. in $\Omega$.
\end{proof}

\begin{remark}\label{prossimo}
Reasoning as in the proof of Theorem 1.1 in \cite{ACF}, it can be shown that all the
minimizers of problem \eqref{M} belong to $\mathcal{G}({m}_0)$.  As already noted
after Theorem \ref{cantrell2}, we will include this result in a future paper.
\end{remark}

\noindent As secondary result, our work shows how Theorem 3.9 in \cite{CC} can be
regarded as a corollary of Proposition \ref{CCP}.

\section{Sums of closures of classes of rearrangements}\label{somme}

In this section we prove a new property about the classes of
rearrangements of measurable functions.
Let $E\subset\mathbb{R}^N$ be a set of finite measure.

\begin{definition}
Let $V$ be a real vector space, $A_1, \ldots, A_n\subseteq V$ and $\alpha\in \mathbb{R}$. We define
\begin{linenomath}$$\sum_{i=1}^n A_i= \left\{ v\in V: v=\sum_{i=1}^n a_i, a_i\in A_i, i=1, \ldots, n\right\}$$
 \end{linenomath}
and
\begin{linenomath}$$\alpha A=\{v\in V: v=\alpha a, a\in A\}.$$
 \end{linenomath}
\end{definition}

\noindent As stated in the following theorem, 
when $A_i=\overline{\mathcal{G}(f_i)}$ for some bounded
functions $f_i$, $i=1, \ldots,n$, their sum is equal to
$\overline{\mathcal{G}(\textstyle\sum_{i=1}^{n}f_i)}$
provided every pair of functions $f_i, f_j$ realizes the equality in \eqref{HLr}. 

\begin{theorem}\label{sommacla}
Let $f_1,\ldots,f_n\in L^\infty(E)$ be such that $\int_E f_i f_j\,dx=\int_0^{|E|}f_i^*f_j^*\,ds$ for all $i,j=1,\ldots,n$, $i\neq j$. Then
\begin{linenomath}
\begin{equation*}
\sum_{i=1}^{n}\overline{\mathcal{G}(f_i)}=\overline{\mathcal{G}(\textstyle\sum_{i=1}^{n}f_i)}.\end{equation*}
\end{linenomath}
\end{theorem}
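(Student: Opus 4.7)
The plan is to prove the two inclusions separately, with the Hardy--Littlewood-equality hypothesis entering only through a key identity on decreasing rearrangements. For $\overline{\mathcal{G}(\sum_i f_i)} \subseteq \sum_i \overline{\mathcal{G}(f_i)}$, I would take $g \prec \sum_i f_i$ (using Proposition \ref{prec} to characterise the weak-$*$ closure) and iterate Proposition \ref{dayprec}: first split $g = g_1 + h_2$ with $g_1 \prec f_1$ and $h_2 \prec \sum_{i\geq 2} f_i$, then repeat on $h_2$. After $n-1$ steps this gives $g = \sum_i g_i$ with $g_i \prec f_i$, hence $g_i \in \overline{\mathcal{G}(f_i)}$.

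For the reverse inclusion, given $g = \sum_i g_i$ with $g_i \prec f_i$, I would verify $g \prec \sum_i f_i$. The total-integral equality is automatic since $\prec$ preserves integrals. For the partial-integral inequality, Corollary \ref{corhl} together with subadditivity of suprema yields
\[
\int_0^t g^*\, ds \;\leq\; \sum_i \sup_{|A|=t} \int_A g_i\, dx \;=\; \sum_i \int_0^t g_i^*\, ds \;\leq\; \sum_i \int_0^t f_i^*\, ds,
\]
so what remains is the key identity $\sum_{i=1}^n \int_0^t f_i^*\, ds = \int_0^t \bigl(\sum_{i=1}^n f_i\bigr)^*\, ds$ for every $t \in (0, |E|)$; the $\leq$ direction is again immediate from Corollary \ref{corhl}, so only $\geq$ uses the hypothesis.

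I would prove the key identity by induction on $n$. The base case $n=2$ is the crux: after shifting so that $f_1, f_2 \geq 0$ (adding a constant preserves both the hypothesis and the conclusion, since $(f_i + c)^* = f_i^* + c$), the layer-cake identity rewrites $\int_E f_1 f_2\, dx$ as the double integral of $|\{f_1 > s\} \cap \{f_2 > u\}|$ over $(s, u) \in (0, \infty)^2$, and analogously $\int_0^{|E|} f_1^* f_2^*\, ds$ as the double integral of $\min(|\{f_1 > s\}|,\, |\{f_2 > u\}|)$. The hypothesis then forces $|\{f_1 > s\} \cap \{f_2 > u\}| = \min(|\{f_1 > s\}|,\, |\{f_2 > u\}|)$ for a.e.\ $(s,u)$, i.e., the superlevel sets of $f_1$ and $f_2$ are pairwise nested by inclusion. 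From this nesting I would construct, for each $t \in (0, |E|)$, a single set $A_t$ of measure $t$ that realises the supremum in Corollary \ref{corhl} simultaneously for $f_1$ and $f_2$, yielding $\int_0^t (f_1 + f_2)^*\, ds \geq \int_{A_t}(f_1 + f_2)\, dx = \int_0^t f_1^*\, ds + \int_0^t f_2^*\, ds$. The inductive step is routine: the inductive hypothesis $(\sum_{i\geq 2} f_i)^* = \sum_{i\geq 2} f_i^*$ together with the pairwise assumption gives $\int f_1 \cdot \sum_{i\geq 2} f_i\, dx = \sum_{i\geq 2} \int f_1^* f_i^*\, ds = \int f_1^* \cdot (\sum_{i\geq 2} f_i)^*\, ds$, so the pair $f_1,\, \sum_{i\geq 2} f_i$ inherits HL equality and the base case closes the induction.

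The main obstacle is the rigorous construction of the common top-$t$ set $A_t$ when the distributions of $f_1, f_2$ have flat regions: the ``top $t$'' portion of each $f_i$ is then not unique, and one must select compatible choices inside each level set by exploiting the total ordering (by inclusion) of the entire family of superlevel sets provided, through transitivity, by pairwise nesting. Everything else reduces to manipulations already available from Section \ref{preliminaries}.
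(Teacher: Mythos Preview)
Your overall architecture matches the paper's: the inclusion $\overline{\mathcal{G}(\sum_i f_i)}\subseteq\sum_i\overline{\mathcal{G}(f_i)}$ via iterated use of Proposition~\ref{dayprec} is exactly what the paper does, and for the reverse inclusion both you and the paper reduce everything, through Corollary~\ref{corhl}, to the single identity $(\sum_i f_i)^*=\sum_i f_i^*$.

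The genuine difference is in how that identity is established. You argue by layer-cake that the Hardy--Littlewood equality forces, for a.e.\ pair of levels, the superlevel sets of $f_1$ and $f_2$ to be nested by inclusion, and then propose to build a common ``top-$t$'' set realising the supremum in Corollary~\ref{corhl} simultaneously for both functions; you correctly flag the flat-region selection as the delicate point, and then close by induction. The paper instead bypasses all of this with a one-line $L^2$ trick: writing $f=\sum_i f_i$ and expanding
\[
\int_0^{|E|}\Bigl(f^*-\sum_i f_i^*\Bigr)^2\,ds
=\int_0^{|E|}(f^*)^2+\sum_i\int_0^{|E|}(f_i^*)^2-2\sum_i\int_0^{|E|}f^*f_i^*+2\sum_{i\neq j}\int_0^{|E|}f_i^*f_j^*\,,
\]
one replaces each $\int(f_i^*)^2$ and $\int(f^*)^2$ by the corresponding integral over $E$ (Proposition~\ref{furbi}\,iii)), bounds $-\int f^*f_i^*\le -\int f f_i$ by Hardy--Littlewood, and uses the hypothesis for the mixed terms; the right-hand side collapses to $\int_E(f-\sum_i f_i)^2=0$, so $f^*=\sum_i f_i^*$ pointwise, with no induction and no set construction. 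Your route is correct and yields the additional structural information that the superlevel sets are totally ordered by inclusion, but the paper's argument is considerably shorter and sidesteps exactly the technical obstacle you identify.
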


\begin{proof}
First, we show that $\overline{\mathcal{G}(\textstyle\sum_{i=1}^{n}f_i)}\subseteq\sum_{i=1}^{n}\overline{\mathcal{G}(f_i)}$. Assume $h\in\overline{\mathcal{G}(\textstyle\sum_{i=1}^{n}f_i)}$; by Proposition \ref{prec}, $h
\prec \textstyle\sum_{i=1}^{n}f_i$.
Using Proposition \ref{dayprec} $n-1$ times, there exists 
a set of $n$ integrable functions $h_1,\ldots, h_n$ 
such that $h=\sum_{i=1}^n h_i$ and $h_i\prec f_i$, i.e. $h_i\in\overline{\mathcal{G}(f_i)}$ for all $i=1,\ldots,n$. 
Therefore $h\in\sum_{i=1}^{n}\overline{\mathcal{G}(f_i)}$.\\
 Second, we prove the opposite inclusion $\sum_{i=1}^{n}\overline{\mathcal{G}(f_i)}\subseteq\overline{\mathcal{G}(\textstyle\sum_{i=1}^{n}f_i)}$.  We claim that 
 \begin{linenomath}
\begin{equation}\label{somma}\left(\sum_{i=1}^n f_i\right)^*=\sum_{i=1}^n f_i^*.\end{equation}
\end{linenomath}
Putting $f=\sum_{i=1}^n f_i$,
 using iii) of Proposition \ref{furbi} (with $F$ equal to the
 square function), Proposition \ref{s2} and the
 assumption  $\int_E f_if_j\,dx=\int_0^{|E|}f_i^*f_j^*\,ds$ for
 $i,j=1,\ldots,n$, $i\neq j$, we have
\begin{linenomath}
\begin{equation*}
\begin{split}
\int_0^{|E|}\left(f^*-\sum_{i=1}^n f_i^*\right)^2\,ds
&=\int_0^{|E|}(f^*)^2\,ds+\sum_{i=1}^{n}\int_0^{|E|}(f_i^*)^2\,ds\\
&-2\sum_{i=1}^{n}\int_0^{|E|}f^*f_i^*\,ds+2\sum_{\genfrac{}{}{0pt}{2}{i,j=1}{i\neq j}}
^{n}\int_0^{|E|}f_i^*f_j^*\,ds\\
&\leq\int_Ef^2\,dx+\sum_{i=1}^{n}\int_Ef_i^2\,dx\\
&-2\sum_{i=1}^{n}\int_Eff_i\,dx+2\sum_{\genfrac{}{}{0pt}{2}{i,j=1}{i\neq j}}^{n}
\int_Ef_if_j\,dx\\
&=\int_E\left(f-\sum_{i=1}^n f_i\right)^2\,dx=0
\end{split}
\end{equation*}
\end{linenomath}
which proves the claim.\\
Now, let $h=\sum_{i=1}^n h_i\in\sum_{i=1}^{n}\overline{\mathcal{G}(f_i)}$ with $h_i\prec f_i$ for $i=1,\ldots,n$.
For each $t\in(0,|E|)$, using \eqref{supA} and  \eqref{somma} we have
\begin{linenomath}\begin{equation*}
\begin{split}
\int_0^th^*\,ds&=\int_0^t\left(\sum_{i=1}^n h_i\right)^*ds=\sup_{\genfrac{}{}{0pt}{2}{A\subseteq E}{|A|=t}}\int_A\sum_{i=1}^n h_i\,dx
\leq\sum_{i=1}^n\sup_{\genfrac{}{}{0pt}{2}{A\subseteq E}{|A|=t}}\int_A h_i\,dx\\
&=\sum_{i=1}^n\int_0^th_i^*\,ds\leq\sum_{i=1}^n\int_0^t f_i^*\,ds=\int_0^t\left(\sum_{i=i}^nf_i\right)^*ds;
\end{split}
\end{equation*}
\end{linenomath}
moreover, using ii) of Proposition \ref{furbi} and 
$h_i\prec f_i$ for $i=1,\ldots,n$, we find
\begin{linenomath}
\begin{equation*}
\begin{split}
\int_0^{|E|}h^*\,ds&=\int_0^{|E|}\left(\sum_{i=1}^n h_i\right)^*ds=\int_E\sum_{i=1}^n h_i\,dx=\sum_{i=1}^n\int_0^{|E|}h_i^*\,ds\\
&=\sum_{i=1}^n\int_0^{|E|}f_i^*\,ds
=\int_E\sum_{i=1}^n f_i\,dx=\int_0^{|E|}\left(\sum_{i=1}^n f_i\right)^*\,ds.
\end{split}
\end{equation*}
\end{linenomath}
Therefore $h\prec\sum_{i=1}^n f_i$, i.e. $h\in\overline{\mathcal{G}(\textstyle\sum_{i=1}^{n}f_i)}$.
This completes the proof.
\end{proof}

\noindent 
Even if the functions $f_i$'s do not realize the equality in \eqref{HLr}, it is still possible to write
$\sum_{i=1}^{n}\overline{\mathcal{G}(f_i)}$ as closure of the class of rearrangements of a suitable function.
Precisely, the following theorem holds.

\begin{theorem}\label{scambio}
Let $g_1,\ldots,g_n\in L^\infty(E)$. Then, there exist
 $f_1,\ldots,f_n$ such that $f_i\sim g_i$  and $\int_E f_i f_j\,dx=\int_0^{|E|}f_i^*f_j^*\,ds$ for all $i,j=1,\ldots,n$, $i\neq j$. 
\end{theorem}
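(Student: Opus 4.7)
The plan is to construct all the $f_i$ simultaneously as compositions of the decreasing rearrangements $g_i^*$ with a single measure-preserving map from $E$ to $(0,|E|)$. First, I would fix a measurable map $\tau\colon E\to(0,|E|)$ with $|\tau^{-1}(A)|=|A|$ for every Borel set $A\subseteq(0,|E|)$. The existence of such a $\tau$ is a classical fact about nonatomic finite measure spaces of equal total mass; in one dimension an explicit choice is $\tau(x)=|E\cap(-\infty,x]|$, and the general case is handled by slicing via Fubini. Once $\tau$ is available, I set
$$f_i(x):=g_i^*\bigl(\tau(x)\bigr),\qquad i=1,\ldots,n.$$

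For each $i$, equimeasurability of $f_i$ with $g_i$ is immediate from the measure-preserving character of $\tau$ together with the fact that $g_i^*$ is itself equimeasurable with $g_i$ (see the remark following Definition \ref{def2}): for every $t\in\mathbb{R}$,
$$\bigl|\{x\in E:f_i(x)>t\}\bigr|=\bigl|\tau^{-1}(\{s:g_i^*(s)>t\})\bigr|=\bigl|\{s:g_i^*(s)>t\}\bigr|=\bigl|\{x\in E:g_i(x)>t\}\bigr|.$$
Hence $f_i\sim g_i$, and by iv) of Proposition \ref{furbi} also $f_i^*=g_i^*$. The desired pairwise identities then follow from the change-of-variables formula for the measure-preserving map $\tau$ applied to the bounded Borel function $g_i^*g_j^*$:
$$\int_E f_i f_j\,dx=\int_E(g_i^*g_j^*)\circ\tau\,dx=\int_0^{|E|}g_i^*(s)g_j^*(s)\,ds=\int_0^{|E|}f_i^*f_j^*\,ds,$$
which is exactly the required equality in the Hardy--Littlewood inequality.

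The main obstacle is justifying the existence of the measure-preserving map $\tau$ in a manner consistent with the excerpt's framework. Although standard in measure theory (any two nonatomic finite measure spaces of the same mass are measure-theoretically isomorphic), it is not proved in what precedes, so a short independent argument would be needed, most directly by induction on $N$. An alternative more in the paper's spirit would be to build the $f_i$ by induction on $n$, applying the second part of Proposition \ref{s2} at each step to align $f_{n+1}$ with all previously chosen $f_1,\ldots,f_n$; this requires the observation that once $f_1,\ldots,f_n$ share a common monotone structure (i.e.\ are simultaneously decreasing functions of one and the same $u$), a single rearrangement of $g_{n+1}$ simultaneously realizes equality with each $f_i$. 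Either route reduces to the same underlying fact, so existence of a common parameterization cannot be circumvented.
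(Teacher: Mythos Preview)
Your construction via a single measure-preserving map $\tau\colon E\to(0,|E|)$ is correct and is a genuinely different route from the paper's. The paper argues by induction: it sets $f_1=g_1$, uses the second part of Proposition~\ref{s2} to pick $f_2\sim g_2$ achieving equality in \eqref{HLr} with $f_1$, and then at each subsequent step chooses $f_{k+1}\sim g_{k+1}$ achieving equality against the \emph{sum} $f_1+\cdots+f_k$. The key device is the identity $(f_1+\cdots+f_k)^*=f_1^*+\cdots+f_k^*$, already established in the proof of Theorem~\ref{sommacla}; with it, the single equality $\int_E(f_1+\cdots+f_k)f_{k+1}\,dx=\int_0^{|E|}(f_1^*+\cdots+f_k^*)f_{k+1}^*\,ds$ splits, and comparison with the termwise Hardy--Littlewood inequalities forces $\int_E f_i f_{k+1}\,dx=\int_0^{|E|}f_i^*f_{k+1}^*\,ds$ for every $i\leq k$ individually. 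Your approach is shorter and handles all pairs at once, but imports the existence of $\tau$, which lies outside the paper's self-contained rearrangement toolkit; the paper's inductive argument stays entirely within the machinery already developed. Your final remark that ``existence of a common parameterization cannot be circumvented'' is a bit too strong: the paper never constructs or invokes such a parameterization explicitly, working instead with the rearrangement-of-sum identity, so the inductive route does sidestep the measure-isomorphism theorem even if the two facts are morally related.
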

\begin{proof}
We put $f_1=g_1$ and by Proposition \ref{s2} we can choose $f_2\sim g_2$ such that $\int_E f_1 f_2\,dx=\int_0^{|E|}f_1^*f_2^*\,ds$. As in the previous proof, we have $(f_1+f_2)^*=f_1^*+f_2^*$.
Similarly, we can select $f_3\sim g_3$ so as to
\begin{linenomath}
$$\int_E (f_1+f_2) f_3\,dx=\int_0^{|E|}(f_1+f_2)^*f_3^*\,ds$$
\end{linenomath}
or, equivalently,
\begin{linenomath}
$$\int_E f_1f_3\,dx+\int_E f_2f_3\,dx=\int_0^{|E|}f_1^*f_3^*\,ds+\int_0^{|E|}f_2^*f_3^*\,ds.$$
 \end{linenomath}
By the Hardy-Littlewood inequality \eqref{HLr} applied to the pairs $f_1,f_3$ and $f_2,f_3$ the last equation leads to
\begin{linenomath}$$\int_E f_1 f_3\,dx=\int_0^{|E|}f_1^*f_3^*\,ds\quad\text{and}\quad\int_E f_2 f_3\,dx=\int_0^{|E|}f_2^*f_3^*\,ds.$$
\end{linenomath}
Once we have found $f_1,\ldots,f_k$ such that $f_i\sim g_i$ and $\int_E f_i f_j\,dx=\int_0^{|E|}f_i^*f_j^*\,ds$ for all $i,j=1,\ldots,k$, $i\neq j$,
we can choose $f_{k+1}\sim g_{k+1}$ such that
\begin{linenomath}
$$\int_E (f_1+\dots+f_k) f_{k+1}\,dx=\int_0^{|E|}(f_1+\dots+f_k)^*f_{k+1}^*\,ds.$$
\end{linenomath}
Using again \eqref{HLr} we conclude $\int_E f_i f_j\,dx=\int_0^{|E|}f_i^*f_j^*\,ds$ for all $i,j=1,\ldots,k+1$, $i\neq j$. After a finite number of steps we find all the functions $f_1,\ldots,f_n$ of the statement.
\end{proof}

\begin{remark}\label{oss1}
As noted above, from Theorem \ref{sommacla} and Theorem \ref{scambio} it
follows that for arbitrary bounded functions $g_1,\ldots,g_n$, there exist
$f_1,\ldots,f_n$, with $f_1\sim g_1,\ldots,f_n\sim g_n$, such that
\begin{linenomath}
$$\sum_{i=1}^{n}\overline{\mathcal{G}(g_i)}=\overline{\mathcal{G}(\textstyle\sum_{i=1}^{n}f_i)}.$$
\end{linenomath}
\end{remark}
\noindent We also note that the product of the closure of a class of rearrangements by a real number is again a closure of a class of rearrangements.

\begin{theorem}
Let $f$ be a bounded measurable function. Then
\begin{equation}\label{omo}
\alpha\overline{\mathcal{G}(f)}=\overline{\mathcal{G}(\alpha f)}
\end{equation}
 for all $\alpha\in\mathbb{R}$.
\end{theorem}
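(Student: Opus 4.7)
The plan is to first prove the equality without closures, $\alpha\mathcal{G}(f)=\mathcal{G}(\alpha f)$, for $\alpha\neq 0$, and then promote it to closures by exploiting the weak*-continuity of the scaling map. Concretely, for the first step, applying Proposition \ref{rospo}(iii) with the Borel function $F(x)=\alpha x$ shows that $g\sim f$ implies $\alpha g\sim\alpha f$, hence $\alpha\mathcal{G}(f)\subseteq\mathcal{G}(\alpha f)$; the reverse inclusion follows from the same proposition applied with $F(x)=x/\alpha$ to an arbitrary $h\sim\alpha f$, yielding $h/\alpha\sim f$ and therefore $h=\alpha(h/\alpha)\in\alpha\mathcal{G}(f)$. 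For the second step, I would observe that the scaling map $T_\alpha\colon L^\infty(E)\to L^\infty(E)$, $T_\alpha(g)=\alpha g$, is a weak*-homeomorphism: the identity $\int_E(\alpha g)h\,dx=\alpha\int_E g h\,dx$ for every $h\in L^1(E)$ shows that both $T_\alpha$ and its inverse $T_{1/\alpha}$ are weak*-continuous. Since a homeomorphism commutes with closures, this yields
\begin{equation*}
\alpha\overline{\mathcal{G}(f)}=T_\alpha\overline{\mathcal{G}(f)}=\overline{T_\alpha\mathcal{G}(f)}=\overline{\alpha\mathcal{G}(f)}=\overline{\mathcal{G}(\alpha f)}.
\end{equation*}

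For the degenerate case $\alpha=0$, the left-hand side reduces to $\{0\}$ because $\overline{\mathcal{G}(f)}$ is nonempty (it contains $f$); on the right, any function equimeasurable with $0$ has vanishing superlevel set $\{g>t\}$ for $t\geq 0$ and full superlevel set for $t<0$, which forces $g=0$ a.e., so $\mathcal{G}(0)=\{0\}$ and $\overline{\mathcal{G}(0)}=\{0\}$ as well.

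I do not foresee any serious technical obstacle. The only mildly delicate point is the sign-change regime $\alpha<0$: a hands-on approach through the majorization order $\prec$ of Definition \ref{prec1} would require establishing $g\prec f\Leftrightarrow -g\prec -f$, which rests on the reflection identity $(-g)^*(s)=-g^*(|E|-s)$ (a.e.\ in $s$) together with a careful swap of the upper and lower halves of the rearrangement in Definition \ref{prec1}. The weak*-homeomorphism route above handles all nonzero $\alpha$ uniformly and sidesteps this bookkeeping.
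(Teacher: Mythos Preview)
Your argument is correct but follows a genuinely different route from the paper. The paper's proof is a single line citing \cite[Lemma~8.2(ii)]{D}, which (combined with the characterization $\overline{\mathcal{G}(f)}=\{h:h\prec f\}$ of Proposition~\ref{prec}) amounts to invoking the compatibility of the majorization order with scalar multiplication, namely $h\prec f\Leftrightarrow \alpha h\prec\alpha f$ for every $\alpha\in\mathbb{R}$. You instead work directly at the level of equimeasurability: Proposition~\ref{rospo}(iii) gives $\alpha\mathcal{G}(f)=\mathcal{G}(\alpha f)$ for $\alpha\neq 0$, and then the weak*-homeomorphism $g\mapsto\alpha g$ transports this equality to the closures. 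The paper's approach is shorter on the page but defers the substantive work---in particular the sign-change case $\alpha<0$, which, as you correctly point out, hinges on the reflection identity $(-g)^*(s)=-g^*(|E|-s)$---to Day's thesis. Your approach is entirely self-contained within the preliminaries already stated in the paper and handles all nonzero $\alpha$ uniformly without that bookkeeping.
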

\begin{proof}
 It is an immediate consequence of (ii) in \cite[Lemma 8.2]{D}.
\end{proof}

\section{Classes of weights $m(x)$ and proof of Theorem \ref{cantrell2}}
\label{weights}

In this section $\Omega\subset\mathbb{R}^N$ denotes a 
measurable set of finite measure.\\
In \cite{CC} the authors considered the class of weights \begin{equation}\label{Mcos}
\begin{split}
 \mathcal{M}=\, &\Bigg\{m(x)\in L^\infty(\Omega):
 -m_2\leq m(x)\leq m_1\text{ a.e. in }\Omega,\int_\Omega m\,dx=m_3 \\ &\text{ and }
 m(x)>0 \text{ on a set of positive measure}\}
\end{split} 
\end{equation}
where $m_1,m_2$ and $m_3$ are constants with $m_1$ and $m_2$ positive. They 
prove that there exists a solution of the problem 
\begin{equation}\label{optimizationCC}
\inf\{\lambda_1(m):m\in\mathcal{M}\}
\end{equation}
of the form $m_1\chi_E-m_2\chi_{\Omega\smallsetminus E}$,
where $E$ is a measurable subset of $\Omega$.
Therefore, problem \eqref{optimizationCC} can be recast as a shape optimization problem on the set $\mathcal{N}=\{m_1\chi_E-m_2\chi_{\Omega\smallsetminus E}: E\subseteq\Omega \text{ is measurable, with } m_1|E|-m_2|\Omega\smallsetminus E|=m_3\}$.
In fact, as it is shown in the following lemmas,  the set 
$\mathcal{N}$ coincides with a class of rearrangements and $\mathcal{M}$ can be written by means of the weak* closure of $\mathcal{N}$. 
Therefore, problem \eqref{optimizationCC}
can be seen as a particular case of problem \eqref{M}.

\begin{lemma}\label{L}
Let $\Omega\subset\mathbb{R}^N$ be a measurable set of finite measure and $m_1,m_2,m_3$ constants such that  $-m_2|\Omega|\leq m_3\leq m_1|\Omega|$. Then, the set of functions $\mathcal{N}=\{m_1\chi_E-m_2\chi_{\Omega\smallsetminus E}: E\subseteq\Omega \text{ is measurable, with } m_1|E|-m_2|\Omega\smallsetminus E|=m_3\}$ coincides with the  class of rearrangements
$\mathcal{G}(m_0)$, where $m_0$ is an arbitrary fixed element of $\mathcal{N}$.
\end{lemma}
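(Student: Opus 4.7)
The approach is to show both inclusions directly using Definition \ref{def1} and some elementary computations of superlevel set measures. Since $m_1,m_2>0$, the constraint $m_1|E|-m_2|\Omega\smallsetminus E|=m_3$ uniquely determines $|E|=(m_3+m_2|\Omega|)/(m_1+m_2)$, which lies in $[0,|\Omega|]$ under the hypothesis $-m_2|\Omega|\leq m_3\leq m_1|\Omega|$, so $\mathcal{N}$ is nonempty.

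First I would establish $\mathcal{N}\subseteq\mathcal{G}(m_0)$. Fix $m_0=m_1\chi_{E_0}-m_2\chi_{\Omega\smallsetminus E_0}\in\mathcal{N}$ and pick any other $m=m_1\chi_E-m_2\chi_{\Omega\smallsetminus E}\in\mathcal{N}$. A direct case analysis on $t$ gives
\begin{linenomath}
$$|\{x\in\Omega:m(x)>t\}|=\begin{cases}0,&t\geq m_1,\\|E|,&-m_2\leq t<m_1,\\|\Omega|,&t<-m_2,\end{cases}$$
\end{linenomath}
and the same formula holds for $m_0$ with $|E_0|$ in place of $|E|$. Since $|E|=|E_0|$ by the integral constraint, $m\sim m_0$ by Definition \ref{def1}, so $m\in\mathcal{G}(m_0)$.

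Next I would prove the reverse inclusion $\mathcal{G}(m_0)\subseteq\mathcal{N}$. Let $f\in\mathcal{G}(m_0)$, so $f\sim m_0$. The above computation of the superlevel set measures of $m_0$ transfers to $f$. In particular, $|\{f>t\}|=0$ for $t\geq m_1$ gives $f\leq m_1$ a.e., and $|\{f>t\}|=|\Omega|$ for $t<-m_2$ gives $f\geq -m_2$ a.e. Moreover, taking the decreasing limit $\varepsilon\to 0^+$ in
\begin{linenomath}
$$|\{f>-m_2-\varepsilon\}|-|\{f>-m_2\}|=|\Omega|-|E_0|$$
\end{linenomath}
yields $|\{f=-m_2\}|=|\Omega|-|E_0|$, and analogously $|\{f=m_1\}|=|E_0|$ (using $f\leq m_1$ a.e.). Since these two measures sum to $|\Omega|$, the function $f$ takes only the values $m_1$ and $-m_2$ a.e. Setting $E:=\{x\in\Omega:f(x)=m_1\}$, we get $f=m_1\chi_E-m_2\chi_{\Omega\smallsetminus E}$ with $|E|=|E_0|$, hence $m_1|E|-m_2|\Omega\smallsetminus E|=m_3$ and $f\in\mathcal{N}$.

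The argument is essentially bookkeeping; the only mild subtlety is the passage from superlevel set measures to the pointwise statement that $f$ is two-valued, which I would handle by the right-continuity of the distribution function together with the jump computations above. No significant obstacle is anticipated.
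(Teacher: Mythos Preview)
Your proof is correct and follows essentially the same route as the paper's: both compute the superlevel-set measure formula \eqref{CA} for elements of $\mathcal{N}$ to get $\mathcal{N}\subseteq\mathcal{G}(m_0)$, and then recover the two-valued structure of any $f\sim m_0$ via the identities $|\{f=m_1\}|=|\{f\geq m_1\}|-|\{f>m_1\}|$ and $|\{f=-m_2\}|=|\{f\geq -m_2\}|-|\{f>-m_2\}|$ obtained from limits in $t$. The only minor difference is that the paper first isolates the degenerate endpoint cases $m_3=-m_2|\Omega|$ or $m_3=m_1|\Omega|$ (where $\mathcal{N}$ is a singleton) and does not assume $m_1,m_2>0$ individually---your formula for $|E|$ needs only $m_1+m_2>0$, which already follows from a strict inequality in the hypothesis.
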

\begin{proof}
If an equality sign holds in $-m_2|\Omega|\leq m_3\leq m_1|\Omega|$, then the set $
\mathcal{N}$ reduces to a singleton containing a constant function. In this case the 
statement is trivially true, hence in the rest of the proof we assume  $-m_2|\Omega|<
m_3< m_1|\Omega|$. Clearly, for any element of $\mathcal{N}$, the set $E$ has measure $e=(m_2|\Omega|+m_3)/(m_1+m_2)$. We recall that a class of rearrangements  is an equivalence class with respect to the equimeasurability relation among measurable
functions. First, we show that all the elements of $\mathcal{N}$ are  equimeasurable. This follows immediately from the identity 
\begin{equation}\label{CA}
 |\{x\in \Omega: f(x)>t\}|=
\begin{cases}
|\Omega|&\text{ if } t<-m_2,\\
e&\text{ if } -m_2\leq t<m_1,\\
0&\text{ if } t\geq m_1
\end{cases} 
\end{equation}
for each $f\in\mathcal{N}$.
Now, let $f$ be a measurable function which satisfies \eqref{CA}. We will show that $f\in\mathcal{N}$ and this will complete the proof. For abbreviation, by $\{f>t\}$ we mean $\{x\in \Omega:f(x)>t\}$ and similarly for $\{f=t\}$ and $\{f\geq t\}$. Applying elementary measure
theory to the identity $\{f\geq t\}=\cap_{k=1}^\infty\{f> t-1/k\}$
for $t=m_1,-m_2$ and using \eqref{CA} we find $|\{f\geq m_1\}|=e$
and $|\{f\geq -m_2\}|=|\Omega|$. Finally, from $|\{f=t\}|=|\{f\geq t\}|-|\{f>t\}|$ and  \eqref{CA} again for $t=m_1,-m_2$, we get $|\{f= m_1\}|=e$ and $|\{f= -m_2\}|=|\Omega|-e$, which imply $f\in\mathcal{N}$.
\end{proof}
\noindent In other words, the class of rearrangements
$\mathcal{N}=\mathcal{G}(m_0)$ consists of ``bang-bang'' functions. As the
following lemma shows, it turns out that the class $\mathcal{
M}$ considered in \cite[Theorem 3.9]{CC} coincides with
the elements of the weak* closure of $\mathcal{G}(m_0)$ 
in $L^\infty(\Omega)$ which are positive in a subset
of $\Omega$ of positive measure. 

\begin{lemma}\label{cantrell}
Let $\Omega\subset\mathbb{R}^N$ be a measurable set of finite measure and $\mathcal{F}=\{m(x)\in L^\infty(\Omega): -m_2\leq m(x)\leq m_1\text{
a.e. in }\Omega\text{ and }\int_\Omega m\,dx=m_3\}$, with $m_1,m_2,m_3$ constants 
and $-m_2|\Omega|\leq m_3\leq
m_1|\Omega|$. Moreover, let $e$ be a nonnegative constant such that $(m_1+m_2)e
=m_2|\Omega|+m_3$. Then $\mathcal{F}
=\overline{\mathcal{G}(m_0)}$, where $m_0$ is an arbitrary function such that $
m_0^*=m_1\chi_{(0,e)}-m_2\chi_{(e,|\Omega|)}$ 
or, equivalently, is of the form
$m_0=m_1\chi_{E_0}-m_2\chi_{\Omega\setminus E_0}$, where $E_0
\subseteq\Omega$ has measure $e$. 
\end{lemma}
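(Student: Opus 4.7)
The plan is to reduce the claimed equality to an application of Proposition \ref{prec}, which identifies $\overline{\mathcal{G}(m_0)}$ with $\{f\in L^\infty(\Omega):f\prec m_0\}$. Thus everything boils down to showing that, for $f\in L^\infty(\Omega)$, the condition $f\prec m_0$ is equivalent to the membership in $\mathcal{F}$, namely $-m_2\le f\le m_1$ a.e. and $\int_\Omega f\,dx=m_3$. I would also handle the degenerate cases $m_3=-m_2|\Omega|$ and $m_3=m_1|\Omega|$ separately at the start, since then $\mathcal{F}$ contains only a constant and the statement is immediate; in the bulk of the argument I assume strict inequalities, so that $0<e<|\Omega|$ and $m_0^*(s)=m_1\chi_{(0,e)}(s)-m_2\chi_{(e,|\Omega|)}(s)$.

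The easy direction is $\overline{\mathcal{G}(m_0)}\subseteq\mathcal{F}$: given $f\prec m_0$, item (ii) of Proposition \ref{prec2} applied with $\alpha=-m_2$, $\beta=m_1$ (which are valid pointwise bounds for $m_0$, hence for $f$) yields $-m_2\le f\le m_1$ a.e., while the equality of the full integrals of $f^*$ and $m_0^*$ combined with (ii) of Proposition \ref{furbi} gives $\int_\Omega f\,dx=\int_\Omega m_0\,dx=m_1 e-m_2(|\Omega|-e)=m_3$.

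The main step is the reverse inclusion. Fix $f\in\mathcal{F}$; I must check $\int_0^t f^*\,ds\le\int_0^t m_0^*\,ds$ for every $t\in[0,|\Omega|]$ and equality at $t=|\Omega|$. For $t\in(0,e]$, the integral of $m_0^*$ equals $m_1 t$, and Corollary \ref{corhl} together with the bound $f\le m_1$ gives
\begin{equation*}
\int_0^t f^*\,ds=\sup_{|A|=t}\int_A f\,dx\le m_1 t=\int_0^t m_0^*\,ds.
\end{equation*}
For $t\in(e,|\Omega|)$ the clean way is to work from the top: write $\int_0^t f^*\,ds=\int_\Omega f\,dx-\int_t^{|\Omega|}f^*\,ds$, use $\int_\Omega f\,dx=m_3$, and bound $\int_t^{|\Omega|}f^*\,ds\ge -m_2(|\Omega|-t)$ via $f^*\ge\text{essinf}\,f\ge-m_2$. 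A direct computation then identifies the resulting upper bound with $m_1 e-m_2(t-e)=\int_0^t m_0^*\,ds$, using $(m_1+m_2)e=m_2|\Omega|+m_3$. Finally, at $t=|\Omega|$, Proposition \ref{furbi}(ii) gives $\int_0^{|\Omega|}f^*\,ds=\int_\Omega f\,dx=m_3=\int_0^{|\Omega|}m_0^*\,ds$. Hence $f\prec m_0$, and by Proposition \ref{prec} this places $f$ in $\overline{\mathcal{G}(m_0)}$.

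The only mild obstacle is the bookkeeping for $t>e$: one must pass through the ``top tail'' identity and use the constraint $\int_\Omega f\,dx=m_3$ rather than bounding $\int_0^t f^*\,ds$ directly, because the pointwise upper bound $f\le m_1$ yields a weaker estimate in that range. Once this is noted, everything reduces to the two identities characterizing $m_0^*$ and the definition of $e$, and the equivalence with $\mathcal{F}$ follows. The equivalent description of $m_0$ as $m_1\chi_{E_0}-m_2\chi_{\Omega\setminus E_0}$ with $|E_0|=e$ is of course the content of Lemma \ref{L}, applied to any representative in the class.
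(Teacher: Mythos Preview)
Your proof is correct and follows essentially the same route as the paper's: both directions are reduced via Proposition~\ref{prec} to the characterization $f\prec m_0$, and the main inclusion is obtained by splitting the range of $t$ at $e$, using the pointwise upper bound for $t\le e$ and the ``top tail'' identity $\int_0^t f^* = \int_\Omega f - \int_t^{|\Omega|} f^*$ together with the lower bound $f^*\ge -m_2$ for $t\ge e$. The only cosmetic difference is that for $t\le e$ you invoke Corollary~\ref{corhl} (the sup characterization) whereas the paper simply uses $m^*\le m_1$ from Proposition~\ref{furbi}(i); either way the bound is immediate.
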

\begin{proof}
Let $m\in\overline{\mathcal{G}(m_0)}$. By Proposition \ref{prec} and ii) of Proposition
\ref{prec2}, we have $-m_2\leq m(x)\leq m_1$ a.e. in $\Omega$.
Moreover, by ii) of Proposition \ref{furbi}, Proposition \ref{prec} and Definition
\ref{prec1} we find $\int_\Omega m\,dx=\int_0^{|\Omega|} m^*\,ds=\int_0^{|\Omega|} m_0^*\,ds=\int_\Omega m_0\,dx=m_3$.
Therefore, $m\in\mathcal{F}$ and then $\overline{\mathcal{G}(m_0)}
\subseteq\mathcal{F}$.\\
Now assume $m\in \mathcal{F}$. Using i) and ii) of Proposition \ref{furbi} we obtain 
$-m_2\leq m^*(x)\leq m_1$ a.e. in $\Omega$ and $\int_0^{|\Omega|}
m^*\,ds=\int_\Omega m\,dx=m_3=\int_\Omega m_0\,dx=\int_0^{
|\Omega|}m_0^*\,ds$.


\noindent Fix $t\leq e$; we have
\begin{linenomath}\begin{equation*}
\int_0^tm^*\,ds\leq\int_0^t m_1\,ds=\int_0^tm_0^*\,ds.
\end{equation*}
\end{linenomath}
If, instead $t\geq e$, we find
\begin{linenomath}\begin{equation*}
\begin{split}
\int_0^tm^*\,ds&=\int_0^{|\Omega|}m^*\,ds-\int_t^{|\Omega|}m^*\,ds\leq\int_0^{|\Omega|}m^*\,ds+\int_t^{|\Omega|}m_2\,ds\\
&= \int_0^{|\Omega|}m_0^*\,ds-\int_t^{|\Omega|}m_0^*\,ds=\int_0^tm_0^*\,ds.
\end{split}
\end{equation*}\end{linenomath}
Then, by Proposition \ref{prec} and Definition \ref{prec1} we find $m\in\overline{\mathcal{G}(m_0)}$ and then
$\mathcal{F}\subseteq\overline{\mathcal{G}(m_0)}$.
The proof is completed.
\end{proof}
\noindent
The following remark contains an alternative and shorter proof of 
Theorem 3.9 in \cite{CC}.

\begin{remark}
By using Lemmas \ref{L} and \ref{cantrell},  
Theorem 3.9 in \cite{CC} follows almost immediately
from Proposition \ref{CCP}. Indeed, let $\mathcal{M}$
be as in Theorem 3.9 in \cite{CC} (i.e. the set \eqref{Mcos})
and let $L^+=\{m(x)\in L^\infty(\Omega):|\{m(x)>0\}|>0\}$.
By Lemma \ref{cantrell} we have $\mathcal{M}=\overline{\mathcal{G}(m_0)}\cap L^+$, where
$m_0=m_1\chi_{E_0}-m_2\chi_{\Omega\setminus E_0}$ and
$E_0$ is an arbitrary measurable subset of $\Omega$
of measure $(m_2|\Omega|+m_3)/(m_1+m_2)$. Furthermore, $m_0$ belongs to the set 
$\mathcal{N}$ of Lemma 1; thus, by the same lemma, $\mathcal{G}(m_0)=\mathcal{N}$. By Proposition \ref{CCP} (being $m_0\in L^+$), there exists 
a minimizer $\check m\in \mathcal{G}(m_0)$ of the problem
\eqref{M}. $\mathcal{G}(m_0)=\mathcal{N}$ implies that
$\check m=m_1\chi_{E}-m_2\chi_{\Omega\setminus E}$ for some
subset $E$ of $\Omega$ such that $|E|=(m_2|\Omega|+m_3)/(m_1+m_2)$. Moreover, $\check m\in L^+$ and then $\check m\in
\mathcal{M}$. Finally, $\check m$ being a minimizer in $\overline{\mathcal{G}(m_0)}$,
it minimizes $\lambda_1(m)$ in $\mathcal{M}$ as well. This prove Theorem 3.9 in
\cite{CC}.

 \end{remark}
\noindent
We now have all the necessary tools to prove Theorem
\ref{cantrell2}.

\begin{proof}[Proof of Theorem \ref{cantrell2}]
By Lemma \ref{cantrell} we have $\mathcal{F}_i=\overline{\mathcal{G}(f_i)}$,  where
$f_i=q_i\chi_{E_i}-p_i\chi_{\Omega\setminus E_i}$ with $E_i\subseteq\Omega$ has measure
\begin{equation}\label{costanti}
e_i=\frac{p_i|\Omega|+l_i}{p_i+q_i},\quad
i=1,2. 
\end{equation}
By Theorem \ref{scambio}, we can replace $f_1$ and $f_2$ (however, we use the same 
symbols)  without changing their class of rearrangements in such a way that $
\int_\Omega f_1 f_2\,dx=\int_0^{|\Omega|}f_1^*f_2^*\,ds$; 
moreover, by Lemma \ref{L}, $f_1$ and $f_2$ are still of the form written above.
By Theorem \ref{sommacla}, we have 
$\overline{\mathcal{G}(f_1)}+\overline{\mathcal{G}(f_2)}=
\overline{\mathcal{G}(f_1+f_2)}$; thus, 
\begin{equation}\label{capL}
\mathcal{M}=(\mathcal{F}_1+\mathcal{F}_2)\cap L^+=
\left(\overline{\mathcal{G}(f_1)}+
\overline{\mathcal{G}(f_2)}\right)\cap L^+=\overline{\mathcal{G
}(f_1+f_2)}\cap L^+. 
\end{equation}
 Arguing as in the proof
of Theorem \ref{sommacla} we find $(f_1+f_2)^*=f_1^*+f_2^*$,
which, exploiting the form of $f_1$ and $f_2$, implies
\begin{equation}\label{f1f2}
(f_1+f_2)^*=(q_1+q_2)\chi_{(0,\gamma)}+r\chi_{(\gamma,\delta)}-(p_1+p_2)\chi_{(
\delta,|\Omega|)},
\end{equation}
where 
$\gamma=\min\{e_1,e_2\}$, $\delta=\max\{e_1,e_2\}$
and
\begin{linenomath}
\begin{equation*}
r=\begin{cases}
q_1-p_2 &\text{if }e_1>e_2,\\
0 & \text{if } e_1=e_2,\\
q_2-p_1 &\text{if } e_1<e_2.
\end{cases}  
\end{equation*}
\end{linenomath}
Equality \eqref{f1f2} implies that $m_0=f_1+f_2$ is a weight
of the form $(q_1+q_2)\chi_{E_0}+ r\chi_{G_0\cap(\Omega\smallsetminus E_0)}-(p_1+p_2)\chi_{\Omega\smallsetminus G_0}$ for two  
subsets $E_0,G_0$ of $\Omega$ such that $E_0\subseteq G_0$ and $|E_0|=\gamma, 
|G_0|=\delta$.
 By Proposition \ref{CCP} (being $m_0\in L^+$), there exists 
a minimizer $\check m\in \mathcal{G}(m_0)$ of the problem
\eqref{M}. Reasoning similarly as we did in the proof of Lemma
\ref{L}, we find that $\check m=(q_1+q_2)\chi_{E}+
r\chi_{G\cap(\Omega\smallsetminus E)}-(p_1+p_2)\chi_{\Omega\smallsetminus G}$ for
two subsets $E,G$ of $\Omega$ such that $E\subseteq G$ and  $|E|=\gamma, 
|G|=\delta$. Moreover, $\check m\in L^+$ and, by \eqref{capL}, $\check m\in
\mathcal{M}$. Clearly,  $\check m$ is a minimizer in $\mathcal{M}$.\\
\noindent
Let $\check{f}_i\in\mathcal{F}_i$, $i=1,2$, such that $\check{m}=\check{f}_1+\check{f}
_2$. Since the subsets $E, G\cap(\Omega\smallsetminus E), G$ form a partition of $\Omega$
and $-p_i\leq \check{f}_i\leq q_i$, $i=1,2$, a.e. in $\Omega$,  we obtain 
$$\check{f}_1=
q_1\chi_E+\check{f}_1\chi_{G\cap(\Omega\smallsetminus E)}-
p_1\chi_{\Omega\smallsetminus G}$$
and
$$\check{f}_2=
q_2\chi_E+\check{f}_2\chi_{G\cap(\Omega\smallsetminus E)}-
p_2\chi_{\Omega\smallsetminus G}.$$
If $e_1=e_2$ we have $|G\cap(\Omega\smallsetminus E|=\delta-\gamma=0$ and $E=G$,
which yield the statement in this case. Let us now assume $e_1\neq e_2$. Integrating 
$\check{f}_i$ over $\Omega$ and because $\check{f}_i\in\mathcal{F}_i$, $i=1,2$, we 
get 
\begin{linenomath}
$$
q_i\gamma+\int_{G\cap(\Omega\smallsetminus E)}\check{f}_i\,dx-p_i(|\Omega|-\delta)=
l_i,\quad i=1,2,
$$
\end{linenomath}
which, by \eqref{costanti}, gives
\begin{linenomath}$$
\int_{G\cap(\Omega\smallsetminus E)}\check{f}_i\,dx=q_i(e_i-\gamma)-p_i(\delta-e_i),\quad i=1,2,
$$\end{linenomath}
and then
\begin{linenomath}$$
\int_{G\cap(\Omega\smallsetminus E)}\check{f}_i\,dx=
\begin{cases}
 q_i(\delta-\gamma) \quad & \text{if } e_i=\delta,\\ 
 -p_i(\delta-\gamma)  & \text{if } e_i=\gamma,
\end{cases}
\quad i=1,2.
$$\end{linenomath}
Recalling that $|G\cap(\Omega\smallsetminus E|=\delta-\gamma$ and $-p_i\leq \check{f}
_i\leq q_i$, $i=1,2$, we conclude
\begin{linenomath}$$
\check{f}_i=
\begin{cases}
 q_i \quad & \text{if } e_i=\delta,\\ 
 -p_i  & \text{if } e_i=\gamma,
\end{cases}
\quad i=1,2.$$
\end{linenomath}
a.e. in $G\cap(\Omega\smallsetminus E)$.
If $e_1>e_2$ we find 
$\check{f}_1=
q_1\chi_E+q_1\chi_{G\cap(\Omega\smallsetminus E)}-
p_1\chi_{\Omega\smallsetminus G}=q_1\chi_G-p_1\chi_{\Omega\smallsetminus G}$
and $\check{f}_2=q_2\chi_E-p_2\chi_{G\cap(\Omega\smallsetminus E)}-
p_2\chi_{\Omega\smallsetminus G}=q_2\chi_E-p_2\chi_{\Omega\smallsetminus E}$
which, by interchanging the symbols $E$ and $G$, gives the claim of the statement
for $e_1>e_2$. The case $e_1<e_2$ can be similarly treated.
This concludes the proof. 
\end{proof}
\noindent
We note that the previous proof can be easily extended to the case of a
finite number of sets $\mathcal{F}_i$, $i=1,\ldots,n$.

\begin{remark}
Let us consider an example. Suppose we have to distributes two different resources
through an environment $\Omega$. The first resource $\mathcal{F}_1$ affects positively
the survival of the population (for example food) whereas the second $\mathcal{F}_2$
does it negatively (for example predators). Moreover we have two independent constraints
on the total amounts of $\mathcal{F}_1$ and $\mathcal{F}_2$. Choosing the parameters in
order to keep very simple our calculation, we write
\begin{linenomath}
$$\mathcal{F}_1=\left\{f_1(x)\in L^\infty(\Omega): 0\leq f_1(x)\leq 
1\text{ a.e. in }\Omega\text{ and }\int_\Omega f_1
\,dx=\frac{2|\Omega|}{3}\right\}$$
\end{linenomath}
and
\begin{linenomath}
$$\mathcal{F}_2=\left\{f_2(x)\in L^\infty(\Omega): -1\leq  f_2(x)\leq 
0\text{ a.e. in }\Omega\text{ and }\int_\Omega f_2\,dx=-\frac{|\Omega|}{2}\right\}.$$
\end{linenomath}
We find $e_1=2|\Omega|/3,\, e_2=|\Omega|/2$. Therefore $e_1>e_2$ and then an optimal
location of the two resources is given by the local grow rate $\check m_1=\chi_E-
\chi_{\Omega\smallsetminus G}$ where  $E,G$ are suitable subsets of $\Omega$ such 
that $G\subseteq E$ and $|E|=2|\Omega|/3$, $|G|=|\Omega|/2$. Note that there is a region
of $\Omega$ ($E\smallsetminus G$) which contains both resources, one at its maximum
density and the other at its minimum. In general, this fact occurs if and only if $e_1\neq
e_2$.\\
It is instructive to examine which is the optimal local grow rate if we replace the two
constraints on the resources by a unique condition. In this case the resources are described 
by a unique density function $f(x)$ belonging to the class
\begin{linenomath}
\begin{equation*}
 \mathcal{F}=\Bigg\{f(x)\in L^\infty(\Omega):
 -1\leq f(x)\leq 1\text{ a.e. in }\Omega, \int_\Omega f\,dx=\frac{|\Omega|}{6}
\Bigg\}.
\end{equation*}
\end{linenomath}
By using Theorem 3.9 in \cite{CC} one finds as the best local grow rate
$\check m_2=\chi_E-\chi_{\Omega\smallsetminus E}$ where $E$ is a subset of
$\Omega$ such that $|E|=e=7|\Omega|/12$. Using Remark \ref{prossimo}
it can be shown that  $\lambda_1(\check m_2)<\lambda_1(\check m_1)$, i.e. 
$\check m_2$ is a better local grow rate than $\check m_1$.\\
Summarizing, $\check m_1$ and $\check m_2$ are two different trade-offs: the former
gives the best arrangement of the resources when the amount of each of them is
fixed, the latter yields a better chance of survival of the population but it satisfies
a less stringent condition on the availability of the resources.
\end{remark}

\section{Steiner Symmetry}\label{secsteiner}

\noindent Following \cite{Bro}, we introduce the notion of Steiner symmetrization.\\
Let $l(x')= \{x=(x_1,x')\in \mathbb{R}^N: x_1\in \mathbb{R}\}$ for any fixed $x'\in \mathbb{R}^{N-1}$  and let  $T$ be the hyperplane $\{x=(x_1,x')\in \mathbb{R}^N: x_1=0\}$.

\begin{definition}
Let $E\subset \mathbb{R}^N$ be a measurable set.\\
Then the set $
E^\sharp=\left\{x=(x_1,x')\in\mathbb{R}^N:2|x_1|<|E \cap
l(x')|_1, x' \in\mathbb{R}^{N-1}\right\}$
is said the \emph{Steiner symmetrization} of $E$ with respect to the hyperplane $T$ and $E$ is called \emph{Steiner symmetric} if $E^\sharp=E$.
\end{definition}

\noindent It can easily be shown that $|E|=|E^\sharp|$.\\
In the sequel, by $\{u>c\}$ we mean the set $\{x\in E :u(x)>c\}$. 
\begin{definition}\label{St.f}
Let $E\subset\mathbb{R}^N$ be a measurable set of finite measure and
$u: E \to \mathbb{R}$ a measurable function bounded from below. Then the function $u^\sharp: E^\sharp \to \mathbb{R}$, defined by $u^\sharp(x)=\sup\{c\in \mathbb{R}:x\in\{u>c\}^\sharp\}$,
is said the \emph{Steiner symmetrization} of $u$ with respect to the hyperplane $T$ and $u$ is called  \emph{Steiner symmetric} if $u^\sharp=u$ a.e. in $E$.
\end{definition}
\noindent
It can be proved that 
$\{x\in E^\sharp: u^\sharp(x)>t\}=\{x\in E: u(x)>t\}^\sharp$ for all $t\in \mathbb{R}$. In particular, when $E$ is Steiner symmetric, $u$ and $u^\sharp$ are equimeasurable.

\smallskip
 \noindent We remind some well known properties of the Steiner symmetrization:\\
\emph{a}) if $E\subset\mathbb{R}^N$ is a measurable set of finite measure, 
$u: E \to \mathbb{R}$ is a measurable function bounded from below and $\psi: \mathbb{R}\to\mathbb{R}$ is an increasing
function, then
\begin{equation}\label{crescente}
\psi(u^\sharp)= (\psi(u))^\sharp\quad\text{a.e. in } E
\end{equation}
 (see \cite[Lemma 3.2]{Bro});\\
\emph{b}) if $E\subset\mathbb{R}^N$ is a measurable set of finite measure, $u, v:E \to \mathbb{R}$ are two measurable functions bounded from below such that $uv\in L^1(E)$,
then the \emph{Hardy-Littlewood inequality} holds:
\begin{equation}\label{HL}
\int_E u(x)v(x)\,dx\leq \int_{E^\sharp}u^\sharp(x)v^\sharp(x)\,dx
\end{equation}
(see \cite[Lemma 3.3]{Bro});\\
\emph{c}) if $\Omega$ is a bounded domain and $u\in H_0^1(\Omega)$ is nonnegative, then $u^\sharp\in H_0^1(\Omega^\sharp)$
and the \emph{P\`olya-Szeg\"o inequality} holds:

\begin{equation}\label{PS}
\int_{\Omega^\sharp}|\nabla u^\sharp(x)|^2\,dx\leq\int_\Omega|\nabla u(x)|^2\,dx 
\end{equation}
(see \cite[Theorem 2.1]{CF}).\\
\noindent The proof of Theorem \ref{steiner} relies on a deep result of Cianchi and Fusco which we specialize to our case (see \cite[Theorem 2.6 and Proposition 2.3]{CF}).

\begin{proposition}\label{CF}
Let $\Omega\subset\mathbb{R}^N$ be a Steiner symmetric bounded domain.
Let $u\in H^1_0(\Omega)$ be a nonnegative function
satisfying
\begin{equation}\label{7b}
\left|\left\{(x_1,x')\in\Omega:u^\sharp_{x_1}(x_1,x')=0\right\}\cap
\big\{(x_1,x')\in \Omega:u^\sharp(x_1,x')<M(x')\big\}\right|=0.
\end{equation}
where $M(x')={\rm esssup} \big\{ u^\sharp(x_1,x'): (x_1,x')\in \Omega\cap l(x')\big\}$.
If equality is attained in \eqref{PS},  
then $u^\sharp=u$ a.e. in $\Omega$. 
\end{proposition}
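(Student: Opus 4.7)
The plan is to slice $\Omega$ by lines parallel to the $x_1$-axis, apply the 1D equality case of P\`olya-Szeg\"o on each slice, and invoke condition \eqref{7b} to rule out the residual translation freedom.

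First I would decompose $|\nabla u|^2 = u_{x_1}^2 + |\nabla_{x'} u|^2$. By Steiner symmetry of $\Omega$ relative to $T$, each non-empty slice $\Omega\cap l(x')$ is a symmetric open interval $(-a(x'), a(x'))$; by Fubini, for a.e.\ $x'$ the function $v_{x'}(x_1):=u(x_1,x')$ belongs to $H_0^1(-a(x'),a(x'))$, is nonnegative, and (unpacking Definition \ref{St.f}) has 1D symmetric decreasing rearrangement equal to $u^\sharp(\cdot,x')$. Next I would establish the axial and transverse P\`olya-Szeg\"o inequalities
\[
\int_\Omega u_{x_1}^2\,dx\geq \int_\Omega (u^\sharp_{x_1})^2\,dx\quad\text{and}\quad \int_\Omega |\nabla_{x'} u|^2\,dx\geq \int_\Omega |\nabla_{x'} u^\sharp|^2\,dx,
\]
the first by integrating the 1D P\`olya-Szeg\"o over $x'$ and the second via a coarea argument showing that the relative perimeter of superlevel sets in the $x'$-directions does not increase under Steiner rearrangement. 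Their sum equals \eqref{PS}; the standing assumption of equality in \eqref{PS} therefore forces equality in both, and in particular the 1D axial inequality is sharp for a.e.\ $x'$.

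The 1D equality case, proved by combining the coarea identity $\int_{-a}^a (v_{x'}')^2\,dx_1 = \int_0^{M(x')} \sum_{v_{x'}(y)=t} |v_{x'}'(y)|\,dt$ with the Cauchy-Schwarz bound $(\sum |v_{x'}'|)(\sum |v_{x'}'|^{-1})\geq n(t)^2$, where $n(t)=\#\{v_{x'}=t\}$ and $\sum |v_{x'}'|^{-1} = -\frac{d}{dt}|\{v_{x'}>t\}|$ depends only on the distribution of $v_{x'}$, forces $n(t)=2$ and equal slopes at the two boundary points of $\{v_{x'}>t\}$ for a.e.\ $t$. Equivalently, $v_{x'}$ is symmetric unimodal, so $v_{x'}(x_1) = u^\sharp(x_1-\tau(x'),x')$ for some measurable translation $\tau(x')$.

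The main obstacle, and where condition \eqref{7b} enters decisively, is to force $\tau\equiv 0$. Suppose, for some positive $(N-1)$-measure set of $x'$, that the essential support of $u^\sharp(\cdot,x')$ were a proper subinterval $(-b(x'),b(x'))\subsetneq(-a(x'),a(x'))$; then on $(-a(x'),-b(x'))\cup(b(x'),a(x'))$ we would have $u^\sharp = 0 < M(x')$ and $u^\sharp_{x_1}=0$, producing a positive-measure subset of $\Omega$ on which \eqref{7b} fails---a contradiction. Hence the essential support of $u^\sharp(\cdot,x')$ fills the whole slice for a.e.\ $x'$. Combined with $v_{x'} \in H_0^1(-a(x'),a(x'))$, the representation $v_{x'}= u^\sharp(\cdot-\tau(x'),x')$ leaves no room for a nontrivial translation: $\tau(x')=0$ a.e., whence $u=u^\sharp$ a.e.\ in $\Omega$. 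The remaining technical points are the rigorous derivation of the transverse P\`olya-Szeg\"o inequality, the measurable selection of $\tau$, and the treatment of degenerate slices where $M(x')=0$.
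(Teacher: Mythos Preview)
The paper does not supply a proof of this proposition: it is stated as a specialization of Theorem~2.6 and Proposition~2.3 of Cianchi--Fusco \cite{CF}, explicitly described as a ``deep result,'' and then used as a black box in the proof of Theorem~\ref{steiner}. So there is nothing in the paper to compare your argument against.

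Your outline nonetheless has a genuine gap, and it lies exactly where you do \emph{not} flag a difficulty. The inference ``$n(t)=2$ with equal slopes for a.e.\ $t$ $\Rightarrow$ $v_{x'}$ is symmetric unimodal $\Rightarrow$ $v_{x'}=u^\sharp(\cdot-\tau(x'),x')$'' is false without a no-plateau hypothesis on the rearrangement. A one-dimensional obstruction: on $(-4,4)$ let $w$ be the even piecewise-linear function with $w(0)=2$, $w(\pm1)=w(\pm2)=1$, $w(\pm3)=0$, linear in between, and $w\equiv0$ on $[3,4]$; then $w=w^\sharp$. Now slide the inner tent $\{w>1\}=(-1,1)$ one unit to the right inside the plateau, obtaining $v$ with $v\equiv1$ on $[-2,0]$ and peak at $x_1=1$. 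One checks that $v\sim w$, that $\int (v')^2=\int (w')^2=4$, and that $n(t)=2$ for every $t\in(0,2)\setminus\{1\}$, yet $v$ is not a translate of $w$ (it has a single plateau of length~$2$, while $w$ has two of length~$1$). The coarea identity you wrote is only valid off the critical set, and it is precisely the plateau level that carries this extra freedom.

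Condition \eqref{7b} is exactly what rules this out: it says that for a.e.\ $x'$ the slice $u^\sharp(\cdot,x')$ has no plateau below $M(x')$, and by equimeasurability the same then holds for $v_{x'}$. You must invoke \eqref{7b} \emph{inside} the 1D equality analysis, not only afterwards; once you do, the Brothers--Ziemer-type argument becomes legitimate and yields the translate conclusion. Your subsequent use of \eqref{7b} (positivity of $u^\sharp(\cdot,x')$ on the whole slice, hence $\tau(x')=0$ via the boundary condition) is then correct, but it is a corollary of the same hypothesis rather than an independent application. As a minor point, the transverse inequality $\int_\Omega|\nabla_{x'}u|^2\geq\int_\Omega|\nabla_{x'}u^\sharp|^2$ is standardly obtained from the $L^2$-nonexpansivity of the one-dimensional rearrangement applied to difference quotients in $x'$, not from a perimeter/coarea argument as you suggest.
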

\begin{proof}[Proof of Theorem \ref{steiner}]
 Let $\check  m$ be a minimizer of problem \eqref{teo2}; by Proposition \ref{CCP}, there exists an increasing function $\psi$  such that $\check{m}=\psi(u_{\check m})$, where $u_{\check{m}}$ denotes the unique positive eigenfunction normalized by $\|u_{\check m}\|_{H_0^1(\Omega)}= 1$ of \eqref{A} with $m=\check{m}$ and $\lambda=
\lambda_1(\check m)$. Therefore, by property \eqref{crescente}, the Steiner symmetry of $\check{m}$ is an immediate consequence of the
Steiner symmetry of $u_{\check m}$. 
Hence, we only need to show that $u_{\check m}^\sharp=u_{\check m}$.
By using \eqref{D} with $m=\check m$, $\lambda=\lambda_1(\check m)$, $u=u_{\check m}$ and letting $\varphi\to u_{\check m}$ in
$H_0^1(\Omega)$ we find
\begin{linenomath}
\begin{equation*}
\check\lambda_1=\lambda_1(\check m)=\cfrac{\int_\Omega |\nabla u_{\check m}|^2\,dx}{\int_\Omega \check m u_{\check m}^2
\,dx}\,.
\end{equation*}
\end{linenomath}
The inequalities \eqref{HL}, \eqref{PS} and property \eqref{crescente} yield 
\begin{linenomath}
\begin{equation*}
 \int_\Omega \check m u_{\check m}^2\,dx\leq\int_\Omega \check m^\sharp (u_{\check m}^\sharp)^2
\,dx
\quad\text{and}\quad
\int_\Omega |\nabla u_{\check m}|^2\,dx\geq \int_\Omega |\nabla u_{\check m}^\sharp|^2\,dx.
\end{equation*}
\end{linenomath}
Consequently we deduce
\begin{linenomath}
\begin{equation*}
\check\lambda_1=\cfrac{\int_\Omega |\nabla u_{\check m}|^2\,dx}{\int_\Omega \check m u_{\check{m}}^2
\,dx}\geq\cfrac{\int_\Omega |\nabla u_{\check m}^\sharp|^2\,dx}{\int_\Omega \check m^\sharp 
(u_{\check m}^\sharp)^2
\,dx}\,.
\end{equation*}
\end{linenomath}
Exploiting \eqref{F} and the minimality of $\check\lambda_1$ we can write
\begin{equation}\label{rocek}
\frac{1}{\check\lambda_1}=\cfrac{\int_\Omega \check m u_{\check{m}}^2
\,dx}{\int_\Omega |\nabla u_{\check m}|^2\,dx}\leq\cfrac{\int_\Omega \check m^\sharp 
(u_{\check m}^\sharp)^2
\,dx}{\int_\Omega |\nabla u_{\check m}^\sharp|^2\,dx}\leq \cfrac{\int_\Omega \check m^\sharp 
(u_{\check m^\sharp})^2
\,dx}{\int_\Omega |\nabla u_{\check m^\sharp}|^2\,dx}\,=
\frac{1}{\lambda_1(\check m^\sharp)}\leq\frac{1}{\check\lambda_1}.
\end{equation}
Therefore, all the previous inequalities become equalities and yield
\begin{linenomath}
\begin{equation}\label{K}
\int_\Omega \check m u_{\check m}^2\,dx=\int_\Omega \check m^\sharp (u_{\check m}^\sharp)^2
\,dx
\quad\text{and}\quad\int_\Omega |\nabla u_{\check m}|^2\,dx= \int_\Omega |\nabla u_{\check m}^\sharp|^2\,dx;
\end{equation}
\end{linenomath}
furthermore, by \eqref{F}, $u_{\check{m}}^\sharp$ is an eigenfunction associated to $\lambda_1(\check m^\sharp)$. By the simplicity of $\lambda_1(\check m^\sharp)$,
$u_{\check m}^\sharp$ being positive in $\Omega$ and, by \eqref{K},
$\|u_{\check m}^\sharp\|_{H_0^1(\Omega)}=\|u_{\check m}\|_{H_0^1(\Omega)}=1$, we conclude that $u_{\check m}^\sharp=u_{\check m^\sharp}$.\\
For simplicity of notation, we put $v=u_{\check m}^\sharp=u_{\check m^\sharp}$. The second identity of \eqref{K} will give our result provided we show that the hypothesis \eqref{7b} of Proposition \ref{CF} with $u=u_{\check m}$
is satisfied.
The rest of the proof  is devoted to this task.
By \eqref{rocek}, $\check m^\sharp$ is a minimizer of \eqref{teo2}
and $v$ is the normalized positive eigenfunction associated to
$\lambda_1(\check{m}^\sharp)=\check\lambda_1$.
Moreover, by Proposition \ref{CCP}, there exists an 
increasing function $\Psi$ such that $\check m^\sharp=\Psi(v)$.
Thus $v$ satisfies the problem
\begin{equation}\label{10}
\begin{cases}-\Delta v =\check\lambda_1 \Psi(v) v\quad &\text{in } \Omega,\\
v=0 &\text{on } \partial\Omega. \end{cases}  
\end{equation}
Let $\Omega_+=\{(x_1,x')\in \Omega:\, x_1>0\}$ and  $C_{0,+}^\infty({\Omega_+})=\{\varphi\in C_0^\infty({\Omega_+}): \varphi \text{ is nonnegative}\}$. From \eqref{10} in
weak form we have
\begin{linenomath}
$$\int_{\Omega_+} \nabla v\cdot \nabla \varphi_{x_1}\,dx=\check\lambda_1\int_{\Omega_+}
\Psi(v)v\,\varphi_{x_1}\,dx\quad\forall \varphi\in C^\infty_{0,+}({\Omega_+}).$$
\end{linenomath}
Being $v\in W^{2,2}(\Omega)$, we can rewrite the previous equation as
\begin{linenomath}
$$-\int_{\Omega_+} \nabla v_{x_1}\cdot \nabla \varphi\,dx=\check\lambda_1\int_{\Omega_+}
\Psi(v)v\,\varphi_{x_1}\,dx.$$
\end{linenomath}
Adding  $\check\lambda_1\int_{\Omega_+}
\Psi(v)v_{x_1}\,\varphi\,dx$ to both sides and since $v\in C^{1,\beta}(\overline{\Omega})$,
it becomes
\begin{equation}\label{E-1}
-\int_{\Omega_+} \nabla v_{x_1}\cdot \nabla \varphi\,dx+\check\lambda_1\int_{\Omega_+}
\Psi(v)v_{x_1}\,\varphi\,dx=\check\lambda_1\int_{\Omega_+} \Psi(v)(v\,\varphi)_{x_1}\,dx.
\end{equation}
Let us show that $\int_{\Omega_+} \Psi(v)(v\,\varphi)_{x_1}\,dx\geq 0.$
By Fubini's Theorem we get
\begin{equation}\label{E0}
\int_{\Omega_+} \Psi(v)(v\,\varphi)_{x_1}\,dx=
\int_{\mathbb{R}^{N-1}}dx'\int_0^{b(x')} \Psi(v)(v\,\varphi)_{x_1}\,dx_1,
\end{equation}
where $b(x')=|\Omega \cap l(x')|_1/2$.\\
For any fixed $x'\in\mathbb{R}^{N-1}$, let $\alpha=\alpha(x_1)$ be a primitive of $(v\,\varphi)_{x_1}$ on $[0, b(x')]$.
Since $\alpha(x_1)$ is continuous and $\Psi(v)$ is decreasing
with respect to $x_1$, the Riemann-Stieltjes integral $\int_0^{b(x')} \Psi(v)\,d\alpha(x_1)$ is well defined (see \cite[Theorem 7.27 and the subsequent note]{A}).
Moreover, by using \cite[Theorem 7.8]{A} we have
\begin{equation}\label{E1}\quad
\int_0^{b(x')} \Psi(v)(v\,\varphi)_{x_1}\,dx_1=\int_0^{b(x')} \Psi(v)\,d\alpha(x_1).
\end{equation}
By \cite[Theorems 7.31 and 7.8]{A} there exists a point $x_0$ in $[0, b(x')]$ such that
\begin{linenomath}
\begin{equation*}
\begin{split}
-\int_0^{b(x')}\Psi(v)\,d\alpha(x_1)&=-\Psi(v(0,x'))\int_0^{x_0} \,d\alpha(x_1)-\Psi(v(b(x'), x'))\int_{x_0}^{b(x')} \,d\alpha(x_1)\\
&=-\Psi(v(0,x'))\int_0^{x_0}(v\,\varphi)_{x_1}\,dx_1-\Psi(v(b(x'),x'))\int_{x_0}^{b(x')}(v\,\varphi)_{x_1}\, dx_1.\\
\end{split}
\end{equation*}
Computing the integrals and recalling that $\varphi\in C^\infty_{0,+}({\Omega_+})$, $v$ is positive and $\Psi(v)$ is decreasing, we conclude that
$$-\int_0^{b(x')}\Psi(v)\,d\alpha(x_1)=v(x_0,x')\varphi(x_0,x')\left[\Psi(v(b(x'),x'))-\Psi(v(0,x'))\right]\leq 0.$$
Therefore, by the previous inequality and \eqref{E1} it follows
$\int_0^{b(x')} \Psi(v)(v\,\varphi)_{x_1}\,dx_1\geq 0$
for any $x'\in\mathbb{R}^{N-1}$ and, in turn, from \eqref{E0} we obtain $\int_{\Omega_+} \Psi(v)(v\,\varphi)_{x_1}\,dx\geq 0$.
Hence, by \eqref{E-1}, $v_{x_1}$ satisfies the differential inequality
\begin{equation*}
\Delta v_{x_1}+\check\lambda_1\Psi(v)v_{x_1}\geq 0\quad \text{in }\Omega_+
\end{equation*}
in weak form.
Then, applying \cite[Theorem 2.5.3]{PS} and since $v_{x_1}\leq 0$ in $\Omega_+$, we conclude that either $v_{x_1}\equiv 0$ or
$v_{x_1}<0$. The former would lead to the contradiction $v\equiv 0$ in $\Omega_+$. 
Consequently, we have $v_{x_1}<0$ in $\Omega_+$.
Similarly it can be shown that $v_{x_1}>0$ in $\Omega_-=\{(x_1,x')\in \Omega:\, x_1<0\}$. Thus
\begin{equation*}
\left\{(x_1,x')\in\Omega:v_{x_1}(x_1,x')=0\right\}\cap
\big\{(x_1,x')\in \Omega:v(x_1,x')<M(x')\big\}=\emptyset,
\end{equation*}
\end{linenomath}
where $M(x')={\rm esssup} \big\{ v(x_1,x'): (x_1,x')\in \Omega\cap l(x')\big\}$.
Hence, by Proposition \ref{CF} with $u=u_{\check m}$, we find $u_{\check m}^\sharp= u_{\check m}$ and, finally, $\check m^\sharp=\check m$. This proves the theorem.
\end{proof}
\begin{remark}
The counterpart of this theorem in the case of the fractional Laplacian operator has been proved in \cite{ACF}.
It is somewhat surprising that, in the fractional setting, the proof is much more simple. 
\end{remark}

\noindent As particular cases we obtain the following two corollaries which give the
symmetry
preservation of minimizers of Theorem \ref{cantrell2} and Theorem 3.9 in \cite{CC}. 

\begin{corollary}\label{coro2}
Let $\Omega\subset\mathbb{R}^N$ be a bounded smooth domain and
assume it is Steiner symmetric with respect to the hyperplane
$T=\{x=(x_1,x')\in \mathbb{R}^N: x_1=0\}$.
Moreover, let $E, G$ the measurable subsets of $\Omega$ of the statement
of Theorem \ref{cantrell2}. Then, $E$ and $G$ are Steiner symmetric relative to $T$. 
\end{corollary}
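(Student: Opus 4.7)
The plan is to obtain the Steiner symmetry of $E$ and $G$ as a direct consequence of Theorem \ref{steiner}, by recognizing $E$ and $G$ as superlevel sets of the minimizer $\check m$ from Theorem \ref{cantrell2}.

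First, I would verify that Theorem \ref{steiner} applies to $\check m$. From the proof of Theorem \ref{cantrell2}, $\check m \in \mathcal{G}(m_0)$ with $m_0 = f_1 + f_2$, and $\check m$ minimizes $\lambda_1$ over $\mathcal{M} = \overline{\mathcal{G}(m_0)} \cap L^+$. The bang-bang form of $f_1, f_2$ produced in the proof gives $m_0 = q_1 + q_2 > 0$ on $E_0 \cap G_0$, a set of positive measure $\min(e_1, e_2) > 0$; in particular $|\{m_0 > 0\}| > 0$, and equimeasurability forces $\mathcal{G}(m_0) \subseteq L^+$. Hence $\check m$ is a minimizer of $\lambda_1$ over the whole of $\mathcal{G}(m_0)$, and Theorem \ref{steiner} yields $\check m^\sharp = \check m$ a.e.\ in $\Omega$.

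Next, I would identify $E$ and $G$ explicitly as superlevel sets of $\check m$. Consider the case $e_1 > e_2$, so $E \supset G$. Splitting $\Omega$ into the three pieces $G$, $E\smallsetminus G$, $\Omega\smallsetminus E$ and evaluating $\check f_1, \check f_2$ on each piece gives
\begin{equation*}
\check m = (q_1 + q_2)\chi_G + (q_1 - p_2)\chi_{E\smallsetminus G} - (p_1 + p_2)\chi_{\Omega\smallsetminus E},
\end{equation*}
and the three values satisfy $q_1 + q_2 > q_1 - p_2 > -(p_1 + p_2)$ since the hypotheses $-p_i|\Omega| < l_i < q_i|\Omega|$ imply $p_i + q_i > 0$. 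Thus up to null sets
\begin{equation*}
G = \{\check m > q_1 - p_2\}\quad\text{and}\quad E = \{\check m > -(p_1 + p_2)\}.
\end{equation*}
The case $e_1 < e_2$ is symmetric (the roles of $E$ and $G$ swap), and the degenerate case $e_1 = e_2$ reduces to a two-valued $\check m$ with $E = G = \{\check m > -(p_1+p_2)\}$.

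Finally, Steiner symmetry of $\check m$ means $\{\check m > t\}^\sharp = \{\check m^\sharp > t\} = \{\check m > t\}$ for every $t \in \mathbb{R}$, so \emph{every} superlevel set of $\check m$ is Steiner symmetric. Specializing to the two thresholds above gives $E^\sharp = E$ and $G^\sharp = G$, which is the claim. I do not anticipate any serious obstacle here: the only point requiring a little care is checking the applicability of Theorem \ref{steiner} (in particular, that $\check m$ is a minimizer in $\mathcal{G}(m_0)$ and not merely in $\overline{\mathcal{G}(m_0)} \cap L^+$), and this is settled by the bang-bang structure used in the first paragraph.
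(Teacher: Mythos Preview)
Your proof is correct and follows essentially the same route as the paper's: apply Theorem~\ref{steiner} to the minimizer $\check m\in\mathcal{G}(m_0)$ produced in the proof of Theorem~\ref{cantrell2}, then recover $E$ and $G$ as superlevel sets of the Steiner symmetric function $\check m$. Your version is simply more explicit---you spell out the thresholds realizing $E$ and $G$ and verify carefully that $\check m$ minimizes over $\mathcal{G}(m_0)$---whereas the paper states these points in a single line.
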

\begin{proof}
Applying Theorem \ref{steiner}  to the minimizers $\check m$ of $\lambda_1(m)$ over
$\mathcal{G}(m_0)$ in the proof of Theorem \ref{cantrell2} we deduce that $\check m$
is a Steiner symmetric function. By the claim after Definition \ref{St.f}, any superlevel
set of $\check m$ is Steiner symmetric with respect to $T$; this provides the Steiner
symmetry of $E$ and $G$.
\end{proof}
\noindent Finally, when $\Omega$ is a ball, Corollary \ref{coro2} specializes to the
following assertion.

\begin{corollary}
Let $\Omega$ be a ball in $\mathbb{R}^N$ and $m_0\in L^\infty(\Omega)$ such that 
$|\{m_0>0\}|>0$. Moreover, let $E, G$ the measurable subsets of $\Omega$ of the
statement of Theorem \ref{cantrell2}. Then, $E$ and $G$ are balls concentric with
$\Omega$.
\end{corollary}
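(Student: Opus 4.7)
The plan is to leverage the rich symmetry of a ball by repeatedly applying Corollary \ref{coro2}: since a ball is Steiner symmetric with respect to every hyperplane through its center, the subsets $E$ and $G$ produced by Theorem \ref{cantrell2} must be Steiner symmetric with respect to every such hyperplane. Translating so that $\Omega$ is centered at the origin, I would first observe that each hyperplane $T$ through $0$ is a legitimate axis of symmetry of $\Omega$ in the sense required by Corollary \ref{coro2}, and hence that corollary forces both $E$ and $G$ to be Steiner symmetric relative to $T$ for every such $T$.

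It then remains to prove the purely geometric fact that a measurable subset $A\subseteq \Omega$ which is Steiner symmetric with respect to every hyperplane through $0$ is necessarily a ball centered at $0$ (modulo a null set). I would argue this in two steps. First, Steiner symmetry of $A$ relative to $T$ implies invariance of $A$ under reflection in $T$; since reflections through all hyperplanes containing the origin generate the full orthogonal group $O(N)$ (Cartan--Dieudonn\'e), $A$ is rotationally invariant, so up to a null set $A=\{x\in\Omega:|x|\in S\}$ for some measurable $S\subseteq [0,r_\Omega]$, where $r_\Omega$ denotes the radius of $\Omega$. Second, specializing Steiner symmetry to the hyperplane $\{x_1=0\}$ and taking $x'=0$, the slice $A\cap l(0)=\{(x_1,0):|x_1|\in S\}$ must be an interval symmetric about $0$; this forces $S$ to be of the form $[0,r_*]$ (modulo a null set), and hence $A$ is a ball centered at $0$. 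Applying this conclusion with $A=E$ and $A=G$ yields the claim, the radii being determined by the measure constraints $|E|=e_1$ and $|G|=e_2$ of Theorem \ref{cantrell2}.

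The only nontrivial step is the geometric one above: although the conclusion is intuitive, at the level of merely measurable sets one must carefully handle the ``up to null set'' caveats built into the definition of Steiner symmetry, in particular the passage from rotational invariance to the representation $A=\{x:|x|\in S\}$ with a Lebesgue measurable $S$. These subtleties, however, are resolved by a standard polar-coordinate/Fubini argument, so no serious obstacle arises beyond assembling these classical ingredients.
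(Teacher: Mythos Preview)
Your approach is correct and is exactly the intended one: the paper gives no explicit proof of this corollary, merely stating that it is the specialization of Corollary~\ref{coro2} to the case of a ball, and applying that corollary with respect to every hyperplane through the center---followed by the classical fact that a set Steiner symmetric about all such hyperplanes is a concentric ball---is precisely how that specialization is meant to be read. One small caution: the step where you take the single slice $x'=0$ is not literally justified (Steiner symmetry controls slices only for a.e.\ $x'$), but as you yourself note, the conclusion follows cleanly once you argue instead with a.e.\ $x'$ via Fubini in polar coordinates.
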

  
\begin{corollary}\label{cor1}
Let $\Omega\subset\mathbb{R}^N$ be a bounded smooth domain and
assume it is Steiner symmetric with respect to the hyperplane
$T=\{x=(x_1,x')\in \mathbb{R}^N: x_1=0\}$.
Let $\mathcal{M}=\{m(x)\in L^{\infty}(\Omega):-m_2\leq m(x)\leq m_1
\text{ a.e. } \Omega,\ m(x)>0 \text{ on a set of positive measure and } \int_\Omega m(x)\,dx=m_3\}$, with $m_1,m_2$
and $m_3$ constants such that $m_1$ and $m_2$ are positive and $-m_2|\Omega|<m_3
< m_1|\Omega|$. 
Then, every measurable set $E\subseteq\Omega$ such that $\check m=
m_1\chi_E-m_2\chi_{\Omega\setminus E}\in\mathcal{M}$ and  $\lambda_1(\check m)=
\inf\{\lambda_1(m):m\in\mathcal{M}\}$ is Steiner symmetric relative to $T$. 
\end{corollary}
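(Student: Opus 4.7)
The plan is to reduce this corollary directly to Theorem \ref{steiner} via the identification of $\mathcal{M}$ with (the positive part of) a weak* closure of a class of rearrangements carried out in Lemma \ref{L} and Lemma \ref{cantrell}. Let $m_0 = m_1\chi_{E_0}-m_2\chi_{\Omega\setminus E_0}$, where $E_0\subseteq\Omega$ is an arbitrary measurable set with $|E_0|=e=(m_2|\Omega|+m_3)/(m_1+m_2)$. Since $-m_2|\Omega|<m_3<m_1|\Omega|$, we have $0<e<|\Omega|$, and since $m_1>0$ we have $m_0\in L^+$, where $L^+=\{m\in L^\infty(\Omega):|\{m>0\}|>0\}$. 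By Lemma \ref{cantrell}, $\overline{\mathcal{G}(m_0)}$ equals the set $\mathcal{F}$ of functions in $L^\infty(\Omega)$ satisfying $-m_2\le m\le m_1$ a.e. and $\int_\Omega m\,dx=m_3$, so $\mathcal{M}=\overline{\mathcal{G}(m_0)}\cap L^+$.

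Next I would show that $\check m$ is in fact a minimizer of $\lambda_1$ over $\mathcal{G}(m_0)$, so that Theorem \ref{steiner} applies. By Proposition \ref{CCP}, there exists a minimizer $\widetilde m\in\mathcal{G}(m_0)$ of $\lambda_1$ on $\overline{\mathcal{G}(m_0)}$; by Lemma \ref{L}, $\mathcal{G}(m_0)=\mathcal{N}$, so $\widetilde m$ has the form $m_1\chi_{\widetilde E}-m_2\chi_{\Omega\setminus\widetilde E}$ with $|\widetilde E|=e>0$. In particular $\widetilde m\in L^+$, hence $\widetilde m\in\mathcal{M}$, and consequently
\begin{linenomath}
$$\inf_{\overline{\mathcal{G}(m_0)}}\lambda_1(m)=\lambda_1(\widetilde m)\ge\inf_{\mathcal{M}}\lambda_1(m)\ge\inf_{\overline{\mathcal{G}(m_0)}}\lambda_1(m),$$
\end{linenomath}
so the two infima coincide. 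Now, since $\check m=m_1\chi_E-m_2\chi_{\Omega\setminus E}\in\mathcal{M}$ satisfies $\int_\Omega\check m\,dx=m_3$, one has $|E|=e$; hence $\check m\in\mathcal{N}=\mathcal{G}(m_0)$ by Lemma \ref{L}, and by the chain of equalities above $\check m$ is a minimizer of $\lambda_1$ on $\mathcal{G}(m_0)$, i.e.\ a solution of problem \eqref{teo2}.

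Finally, Theorem \ref{steiner} yields that $\check m$ is Steiner symmetric with respect to $T$, that is, $\check m^\sharp=\check m$ a.e.\ in $\Omega$. Since $\check m$ takes only the values $m_1>0$ and $-m_2<0$, the set $E$ equals (up to a nullset) the superlevel set $\{x\in\Omega:\check m(x)>0\}$; by the identity $\{\check m^\sharp>t\}=\{\check m>t\}^\sharp$ recalled after Definition \ref{St.f} applied at $t=0$, we deduce $E=E^\sharp$ up to a nullset, which is the desired Steiner symmetry.

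There is essentially no obstacle: the corollary is a straightforward composition of the already established machinery (Lemmas \ref{L} and \ref{cantrell}, Proposition \ref{CCP}, Theorem \ref{steiner}). The only small point to verify carefully is that, since $e>0$ and $m_1>0$, the minimizer provided by Proposition \ref{CCP} automatically lies in $L^+$, so that the minimization over $\mathcal{M}$ and over $\overline{\mathcal{G}(m_0)}$ have the same value and the same minimizers in $\mathcal{G}(m_0)$.
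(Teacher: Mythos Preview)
Your proposal is correct and follows essentially the same route as the paper's proof, which simply records that the statement is a straightforward consequence of the proof of Theorem~3.9 in \cite{CC}, Lemma~\ref{L}, Theorem~\ref{steiner}, and the claim after Definition~\ref{St.f}. You replace the external citation to \cite{CC} by the paper's own internal machinery (Lemma~\ref{cantrell} and Proposition~\ref{CCP}), which makes your argument more self-contained but otherwise identical in spirit: identify $\mathcal{M}$ with $\overline{\mathcal{G}(m_0)}\cap L^+$, observe that the bang-bang minimizer lies in $\mathcal{G}(m_0)$, apply Theorem~\ref{steiner}, and read off the symmetry of $E$ from the superlevel-set identity.
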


\begin{proof}
It is a straightforward consequence of the proof of Theorem 3.9 in \cite{CC} (see the first line after (3.13) on page 311), Lemma \ref{L},
Theorem \ref{steiner} and the claim after Definition \ref{St.f}. 
\end{proof}

\noindent \textbf{Acknowledgments}.
The authors are partially supported by the research project Evolutive and stationary Partial
Differential Equations with a focus on biomathematics (Fondazione di Sardegna 2019).
The authors are members of GNAMPA (Gruppo Nazionale per l'Analisi Matematica, la 
Probabilit\`a e le loro Applicazioni) of INdAM (Istituto Nazionale di Alta Matematica
``Francesco Severi").


\begin{thebibliography}{99}


\bibitem{ACF}
C. Anedda, F. Cuccu and S. Frassu, \emph{Steiner symmetry in the minimization of the first eigenvalue of a fractional eigenvalue problem with indefinite weight}, Can. J. Math. (2020),  1-29. 
https://doi.org/10.4153/S0008414X20000267

%

\bibitem{A} 
T.M. Apostol, \emph{Mathematical Analysis}, Addison-Wesley (1974).
 
\bibitem{BHR}
H. Berestycki, F. Hamel and L. Roques, \emph{Analysis of the periodically fragmented environment model: I – Species persistence}, J. Math. Biol. \textbf{51} (2005),  75-113. 
https://doi.org/10.1007/s00285-004-0313-3
 
\bibitem{Bro} 
F. Brock, \emph{Rearrangements and applications to symmetry problems in PDE}, 
Handbook of Differential Equations, Stationary Partial Differential Equations, 
Vol. \textbf{4}, Ch. 1, Edited by M. Chipot, Elsevier BV (2007), 1-60. 
https://doi.org/10.1016/s1874-5733(07)80004-0 

\bibitem{CC}
R.S. Cantrell and C. Cosner, \emph{Diffusive logistic equations with indefinite weights: population models in disrupted environments}, Proc. R. Soc. Edinb. Sect A \textbf{112}, 3-4 (1989),  293-318. 
https://doi.org/10.1017/s030821050001876x

\bibitem{CC91}
R.S. Cantrell and C. Cosner, \emph{The effects of spatial heterogeneity in population dynamics}, J. Math. Biol. \textbf{29} (1991),  315-338. 
https://doi.org/10.1007/BF00167155

\bibitem{CGIKO}
S. Chanillo, D. Grieser, M. Imai, K. Kurata and I. Ohnishi, \emph{Symmetry breaking and other phenomena in the
optimization of eigenvalues for composite membranes}, Commun. Math. Phys. \textbf{214}, 2 (2000),  315-337. 
https://doi.org/10.1007/PL00005534

\bibitem{CF}
A. Cianchi and N. Fusco, \emph{Steiner symmetric extremals in {P{\'o}lya-Szeg{\"o}}-type inequalities}, Adv. Math. \textbf{203}, 2 (2006),  673-728. 
https://doi.org/10.1016/j.aim.2005.05.007

\bibitem{CCP}
C. Cosner, F. Cuccu and G. Porru, \emph{Optimization of the first eigenvalue of equations with indefinite  weights}, Adv. Nonlinear. Stud. \textbf{13}, 1 (2013), 79-95. 
https://doi.org/10.1515/ans-2013-0105

\bibitem{CML1}
S.J. Cox and J.R. McLaughlin, \emph{Extremal eigenvalue problems for composite membranes, I}, Appl. Math. Optim. \textbf{22} (1990), 153-167. 
https://doi.org/10.1007/bf01447325

\bibitem{CML2}
S.J. Cox and J.R. McLaughlin, \emph{Extremal eigenvalue problems for composite membranes, II}, Appl. Math. Optim. \textbf{22} (1990), 169-187. 
https://doi.org/10.1007/bf01447326

\bibitem{D} 
P.W. Day,  \emph{Rearrangements of measurable functions}, Dissertation (Ph.D.), California Institute of Technology (1970).
 
\bibitem{dF}  
D.G. de Figueiredo, 
\emph{Positive solutions of semilinear elliptic problems}, in A. Dold, B. Eckmann (eds), Differential equations, Lecture Notes in Mathematics \textbf{957}, Springer 
(1982), 34-87. 
https://doi.org/10.1007/BFb0066233

\bibitem{GT} 
D. Gilbarg and N.S. Trudinger, \emph{Elliptic Partial Differential Equations of Second Order}, Springer, Berlin, Heidelberg (1977).
https://doi.org/10.1007/978-3-642-96379-7


\bibitem{JP}
K. Jha and G. Porru, \emph{Minimization of the principal eigenvalue under
  Neumann boundary conditions}, Numer. Funct. Anal. Optim. \textbf{32}, 11 (2011), 1146-1165. 
https://doi.org/10.1080/01630563.2011.592244

\bibitem{LLNP}
J. Lamboley, A. Laurain, G. Nadin and Y. Privat, \emph{Properties of optimizers of the principal eigenvalue
with indefinite weight and Robin conditions}, Calc. Var. Partial Differential Equations \textbf{55}, 144 (2016), 1-37. 
https://doi.org/10.1007/s00526-016-1084-6

\bibitem{LY}
Y. Lou and E. Yanagida, \emph{Minimization of the Principal Eigenvalue for an Elliptic Boundary Value Problem with Indefinite Weight,
  and Applications to Population Dynamics}, Jpn J. Indust. Appl. Math. \textbf{23}, 275 (2006), 275-292. 
https://doi.org/10.1007/BF03167595

\bibitem{PS}  
P. Pucci and J. Serrin, 
\emph{Maximum Principles for Elliptic Partial Differential Equations} in 
  Handbook of Differential Equations: Stationary Partial Differential Equations, Vol. \textbf{4}, Ch. 6, Edited by M. Chipot, Elsevier BV (2007), 355-483. 
https://doi.org/10.1016/S1874-5733(07)80009-X

\bibitem{RH}
L. Roques and F. Hamel, \emph{Mathematical analysis of the optimal habitat
  configurations for species persistence}, Math. Biosci. \textbf{210}, 1 (2007), 34-59. 
https://doi.org/10.1016/j.mbs.2007.05.007

\bibitem{SK} 
N. Shigesada  and K. Kawasaki, \emph{Biological invasions: theory and practice}, Oxford Series in Ecology and Evolution, Oxford University Press (1997). 

\bibitem{S}
J.G. Skellam, \emph{Random Dispersal in Theoretical Populations}, Biometrika \textbf{38}, 1/2 (1951), 196-218. 
https://doi.org/10.2307/2332328

\end{thebibliography}

\end{document}